\documentclass[12pt]{amsart}
\usepackage{amsmath,amsfonts,amscd,bezier,amsthm,amssymb}
\usepackage{mathrsfs,mathtools}
\usepackage{xcolor}
\usepackage{cite}
\usepackage{graphicx}
\usepackage{float}
\usepackage{comment}
\usepackage{enumitem,xfrac,nccmath}
\usepackage{tasks, multicol,yhmath}
\usepackage{hyperref}
\usepackage{tikz}
\usetikzlibrary{calc}

\usepackage[skip=10pt plus1pt, indent=15pt]{parskip}

\usepackage[pagewise]{lineno}

\newtheorem{theorem}{Theorem}[section]
\newtheorem{lemma}[theorem]{Lemma}
\newtheorem{corollary}[theorem]{Corollary}
\newtheorem{proposition}[theorem]{Proposition}
\theoremstyle{definition}

\newtheorem{remark}[theorem]{Remark}

\newtheorem{question}[theorem]{Question}

\newcommand{\R}{\mathbb{{R}}}

\newcommand{\T}{\mathbb{{T}}}

\newcommand{\D}{\mathbb{{D}}}

\newcommand{\rkhs}{\mathcal{H}}

\newcommand{\Hol}{\mathrm{Hol}(\D)}

\numberwithin{equation}{section}

\allowdisplaybreaks

\makeatletter
\DeclareRobustCommand{\eqrefp}[2]{%
	\textup{\tagform@{\refp{#1}{#2}}}}
\makeatother
\DeclareRobustCommand{\refp}[2]{%
	\expandafter\ifx\csname r@#1\endcsname\relax
	\textbf{??}%
	\else
	\edef\areferencia{\ref{#1}-)}%
	\expandafter\eqrefpaux\areferencia-#2%
	\fi}
\def\eqrefpaux#1-#2){#1}

\title[$A^p_\alpha$ classes in the Dirichlet range]{$A^p_\alpha$ classes in the Dirichlet range: inner-outer factorization, Carleson measures and weak products}

\author[A. Dayan]{Alberto Dayan}
	\address[A. Dayan]{Departament de Matemàtiques, Universitat Autònoma de Barcelona, 08193 Bellaterra (Barcelona), Spain} \email{alberto.dayan@uab.cat}

    \author[A. Llinares]{Adri\'{a}n Llinares}
    \address[A. Llinares]{Departamento de Matemáticas, Facultad de Ciencias, Universidad Autónoma de Madrid. C/ Francisco Tomás y Valiente 7, 28049 Madrid, Spain}
    \email{adrian.llinares@uam.es}

    \author[M. Monsalve-López]{Miguel Monsalve-López}
	\address[M. Monsalve-López]{Departamento de Análisis Matemático y Matem\'atica Aplicada, Facultad de Ciencias Matemáticas, Universidad Complutense de Madrid, Pl. de las Ciencias 3, 28040 Madrid, Spain}
	\email{migmonsa@ucm.es}

	\thanks{%
        The work of the three authors was partially supported by the grant PID2024-160326NA-I00 from the Spanish Ministry of Science and Innovation. The work of the first author was partially supported by the Emmy Noether Program of the German Research Foundation (DFG Grant 466012782) and by the grant PID2024-160185NB-I00 from the Spanish Ministry of Science and Innovation. The third author is also supported by the grant PID2022-137294NB-I00 from the Spanish Ministry of Science and Innovation}

	\subjclass[2020]{Primary: 47B91; Secondary: 30J05, 30H25.}
	
	\keywords{Dirichlet-type spaces, weak products, conformal invariance}

	\date{\today}
    
\begin{document}
	
    \begin{abstract} 
        We study properties of $A^p_\alpha$ spaces in the Dirichlet range, recently defined by Brevig, Kulikov, Seip and Zlotnikov as the set of all holomorphic functions on the unit disc $\mathbb{D}$ such that
            \[
                \int_{\mathbb{D}} |f(z)|^{p-2} |f'(z)|^2 (1 - |z|^2)^{\alpha} \, dA(z) < \infty,
            \]
            when $0<\alpha < 1$ and $p > 0$. We answer in the negative two questions posed by Brevig et al. by showing that, if $p\ne2$ and $p > \frac{1}{2}$, $A^p_\alpha$ is not a vector space and that the norm is in general not increasing in $p$. This is achieved by means of an equivalent description for $A^p_\alpha$ which is given in terms of the Poisson integral of the boundary function of its inhabitants. Such norm also leads to a description of $A^p_\alpha$ functions in the Dirichlet range given in terms of their inner and outer factors. As a corollary, we show that $A^1_\alpha$ is contained in the weak product of a Dirichlet-type space.
	\end{abstract}
    
	\maketitle

    \section{Introduction}

        Let $p > 0$ be a positive exponent and $\alpha > 1$. For these values, the standard weighted Bergman space $A^p_\alpha$ is defined as the set of all $f \in \Hol$ such that the quantity
            \[
                \| f \|_{\alpha,p}^p := (\alpha - 1) \int_{\D} |f(z)|^p (1 - |z|^2)^{\alpha - 2} \, dA(z),
            \]
            where $dA(z)$ denotes the normalized Lebesgue area measure of $\D$, is finite. It is a well-known fact that, for every $p$ and every $f \in \Hol$,
            \[
                \lim_{\alpha \to 1} \| f \|_{\alpha,p}^p = \sup_{0 \leq r < 1} \dfrac{1}{2\pi} \int_0^{2\pi} |f(re^{it})|^p \, dt
            \]
            and thus it makes sense to understand the Hardy space $H^p$ as $A^p_1$. These spaces are very classical and have been extensively studied (see for instance the monographs \cite{MR268655, MR2033762, MR1758653, MR1669574}) but they have recently attracted a lot of attention because of the contractive embeddings between them and the preservation of conformal invariance on these chains of contractive inclusions. Namely, Kulikov showed in \cite{MR4472587} that
            \begin{equation} \label{eqn:ContractiveEmbbeding}
                \| f \|_{\beta,q} \leq \| f \|_{\alpha,p}
            \end{equation}
            for all $f \in A^p_\alpha$ as long as $p < q$, $1 \leq \alpha < \beta$ and $\frac{\alpha}{p} = \frac{\beta}{q}$. Furthermore, he also showed that equality in \eqref{eqn:ContractiveEmbbeding} is attained if and only if $f$ is a constant multiple of the function
            \[
                k_a (z) := \left( \frac{1 - |a|^2}{(1 - \overline{a}z)^2} \right)^\frac{\alpha}{p}, \quad a \in \D.
            \]
            In other words, $f$ must be a multiple of the image of the constant function $1$ by the weighted composition operator
            \begin{equation} \label{eqn:WComposition}
                T_a(g) = T_a^{\frac{\alpha}{p}} (g) :=\big( \varphi_a' \big)^\frac{\alpha}{p} \cdot (g \circ \varphi_a),
            \end{equation}
            where $\varphi_a (z) = \frac{a - z}{1 - \overline{a}z}$. Using a change of variables, it can be checked that $\| T_a \|$ is uniformly bounded (actually, constant) with respect to $a$ on $A^p_\alpha$ and therefore we say that the \emph{index of conformal invariance} of $A^p_\alpha$ is equal to $\frac{\alpha}{p}$ (see \cite{AM21}). If $p = 2$, then $A^2_\alpha$ is a reproducing kernel Hilbert space and the function $k_a$ is precisely the kernel at the point $a \in \D$.
            The second named author of this manuscript \cite{MR4705691} used the results of \cite{MR4472587} to prove that, when $p > 2$,
            \begin{equation} \label{eqn:Contractivep=2}
                \| f \|_{1, p} \leq \left( \sum_{n = 0}^\infty \dfrac{|a_n|^2}{c_{2/p} (n)} \right)^\frac{1}{2},
            \end{equation}
            where $c_{\alpha}(n) = \frac{\Gamma (n + \alpha)}{\Gamma(\alpha) n!}$, $\Gamma$ being the Gamma function, and $\{ a_n \}_{n \geq 0}$ is the sequence of Taylor coefficients of $f$. It should be noted that the right-hand side of \eqref{eqn:Contractivep=2} is finite if and only if $f' \in A^2_{2/p}$. In other words, if $f$ belongs to the Besov (or Dirichlet-type) space $B^2_{2/p}$, which is the unique Hilbert space that is conformally invariant with index $\frac{1}{p}$ (again, see \cite{AM21}).

        Since $M_p^p (r,f) := \frac{1}{2\pi} \int_{0}^{2\pi} |f(re^{it})|^p \, dt$ is a non-decreasing function with respect to $r$, one may use integration by parts to show that
            \begin{equation} \label{eqn:Apalphanorm}
                \| f \|_{\alpha, p}^p = |f(0)|^p + \int_0^1 \left(\dfrac{d}{dr} M_p^p (r,f) \right) (1 - r^2)^{\alpha - 1} \, dr, \quad \alpha \geq 1.
            \end{equation}
            Observe that $(1 - r^2)^{\alpha - 1}$ is also integrable in the range $0 < \alpha < 1$. This extends the class $A^p_\alpha$ to the range $\alpha \in (0,1)$ using \eqref{eqn:Apalphanorm} as an alternative expression for $\| f \|_{\alpha,p}$. As was shown in \cite{brevig2025contractivehardylittlewoodinequalitiesdirichlet}, $A^p_\alpha$ for $0 < \alpha < 1$ enjoys some of the properties that hold in the Hardy or weighted Bergman case. They showed that the operators defined in \eqref{eqn:WComposition} are isometries on $A^p_\alpha$ and that the inequality \eqref{eqn:ContractiveEmbbeding} holds for every $0 < p < q$ and $0 < \alpha < \beta$ with $\frac{\alpha}{p} = \frac{\beta}{q}$. In particular, this also provides an alternative proof for inequality \eqref{eqn:Contractivep=2} (which corresponds to the case $p = 2$ and $\beta = 1$). However, we must emphasize that $\| f \|_{\alpha, p}^p$ cannot be expressed as a $L^1$ norm of $|f|^p$ (or $|f'|^p$) with respect to a finite measure on $\overline{\D}$ when $\alpha \in (0,1)$. The case $p = 2$ (that is, the Hilbert case) is the unique exception of this fact. This will be of crucial importance for this work.
            
        An application of Hardy-Stein identity,
            \[
                \dfrac{d}{dr} M_p^p (r,f) = \dfrac{p^2}{2} \int_{r\D} |f(z)|^{p-2} |f'(z)|^2 \, dA(z),
            \]
            shows that 
            \begin{equation}
            \label{eqn:equivnorm}
            \|f\|_{\alpha, p}^p\simeq\int_{\D} |f(z)|^{p-2} |f'(z)|^2 (1 - |z|)^{\alpha} \, dA(z),
            \end{equation}
            with equivalence constants depending only on $p$ and $\alpha$.
            This provides yet another equivalent description of $A^p_\alpha$ for all $\alpha>0$. Note that in the range $p=2$ and $0<\alpha<1$ one recovers the  classical Dirichlet-type Hilbert spaces on the unit disc. Moreover, \eqref{eqn:equivnorm} shows that if $f$ does not vanish in $\D$ then
            \begin{equation}
                \label{eqn:nonvanishing}
                f\in A^p_\alpha\iff f^\frac{p}{2}\in A^2_\alpha.
            \end{equation}
            
        The main goal of this work is to deepen our understanding of the class $A^p_\alpha$. The following two questions were posed in \cite{brevig2025contractivehardylittlewoodinequalitiesdirichlet}:
        
        \begin{question}[Problem 1, \cite{brevig2025contractivehardylittlewoodinequalitiesdirichlet}]
        \label{q:linear}
            Let $0<\alpha<1$ and $p \neq 2$. Is $A^p_\alpha$ a vector space?
        \end{question}
        
        \begin{question}[Problem 2, \cite{brevig2025contractivehardylittlewoodinequalitiesdirichlet}]
        \label{q:increasing}
            Let $f$ be an analytic function on the unit disc and $0<\alpha<1$. Is $p\mapsto \|f\|_{\alpha, p}$ increasing?
        \end{question}
            
        Hölder's inequality yields that both questions above have positive answers when $\alpha\ge1$ (that is, when $A^p_\alpha$ is either a Hardy or a weighted Bergman space). We show that Question \ref{q:increasing} is quite far from having a positive answer in the range $0 < \alpha < \frac{1}{2}$, since in this range $A^p_\alpha\not\subset A^q_\alpha$ when $p > q > 0$ (see Corollary \ref{coro:notsub}). Actually, a direct argument in the whole range $0<\alpha<1$ yields that Question \ref{q:increasing} has a negative answer. Consider $e_n(z) = z^n$ for each integer $n \geq 1$. We have
        \[
            \| e_n \|_{\alpha, p} = \left(\int_0^1 \big(np r^{np-1} \big) (1 - r^2)^{\alpha - 1} \, dr \right)^{1/p}  = \left(\frac{\Gamma(\alpha)\Gamma\big(\frac{np}{2} + 1\big)}{\Gamma\big(\frac{np}{2} + \alpha \big)}\right)^{1/p}.
        \]
        Recall that $\Gamma$ is a log-convex (and therefore convex) function in $(0,+\infty)$. In particular, it can be checked that it is decreasing in $(0,1]$ and increasing in $[2, \infty)$. Thus, we may choose $n > \frac{4}{p}$ such that
        \[
            \Gamma(\alpha) > \Gamma(1) = 1 \quad \mbox{ and } \quad \Gamma \left(\frac{np}{2} + 1 \right) \geq \Gamma \left(\frac{np}{2} + \alpha \right),
        \]
        so $\| e_n \|_{\alpha, p} > 1$ but $\lim_{q \to \infty} \| e_n \|_{\alpha, q} = 1$. Hence, these quantities cannot be increasing with respect to the exponent $p$.

        We show that also Question \ref{q:linear} has a negative answer, at least for all $p > \frac{1}{2}$:

        \begin{theorem}
        \label{theo:NonLinear}
            Let $0<\alpha<1$ and $p \neq 2$. If $\alpha + p > \frac{1}{2}$, there exist two functions $f$ and $g$ in $A^p_\alpha$ such that $f+g$ is not in $A^p_\alpha$.
        \end{theorem}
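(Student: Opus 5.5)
The plan is to reduce everything to the zero-free criterion \eqref{eqn:nonvanishing} together with the weighted integral \eqref{eqn:equivnorm}. Write $D_\alpha := A^2_\alpha$ for the Dirichlet-type space. The guiding observation is that adding a constant to a function changes the factor $|h|^{p-2}$ in \eqref{eqn:equivnorm} into $|1+h|^{p-2}$, while $|h'|^2$ is unchanged. These two factors differ drastically wherever $h$ is close to $0$. Since constants lie in $A^p_\alpha$, it suffices to find one bounded function $h$ such that exactly one of $h$, $h-1$ lies in $A^p_\alpha$, and then take $f=1$ together with the other one. The cases $p<2$ and $p>2$ are treated by the same mechanism but with the roles reversed.

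\emph{Case $0<p<2$.} Take $f=-1$ and $g=1+k$, so that $f+g=k$. Here $k\in D_\alpha$ is a bounded zero-free function with $\sup_{\D}|k|\le \frac12$.
\begin{itemize}
\item Since $|1+k|\ge \frac12$, the weight $|1+k|^{p-2}$ is bounded, so $g\in A^p_\alpha$ follows from $k\in D_\alpha$ via \eqref{eqn:equivnorm}.
\item On the other hand, $k\in A^p_\alpha$ holds if and only if $k^{p/2}\in D_\alpha$, by \eqref{eqn:nonvanishing}, and this amounts to
\[
\int_{\D}|k|^{p-2}|k'|^2(1-|z|)^\alpha\,dA<\infty.
\]
\end{itemize}
So we need $k\in D_\alpha$, bounded, zero-free, for which this integral diverges. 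Put $\theta=2-p\in(0,2)$.

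\emph{Case $p>2$.} Take $f=1$ and $g=h:=k^{2/p}$, with $k\in D_\alpha$ zero-free and $|k|\le c$ small.
\begin{itemize}
\item By \eqref{eqn:nonvanishing}, $h\in A^p_\alpha$ because $h^{p/2}=k\in D_\alpha$.
\item Now $|1+h|\asymp 1$, so $1+h\in A^p_\alpha$ if and only if $h\in D_\alpha$.
\item We have $|h'|^2\asymp |k|^{4/p-2}|k'|^2$, so $h\in D_\alpha$ if and only if $\int|k|^{-\theta}|k'|^2(1-|z|)^\alpha\,dA<\infty$, now with $\theta=2-\frac4p\in(0,2)$.
\end{itemize}

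Both cases therefore reduce to a single construction. For a given $\theta\in(0,2)$, we need a bounded, zero-free $k\in D_\alpha$ with $|k|$ small and
\[
\int_{\D}|k|^{-\theta}|k'|^2(1-|z|)^\alpha\,dA=\infty .
\]

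This construction is the main obstacle, and it is where the hypothesis $\alpha+p>\frac12$ should enter. Power functions $(1-z)^\gamma$ do not work: the integral near $1$ behaves like $\int|1-z|^{\gamma(2-\theta)-2+\alpha}\,dA$, which converges for every $\gamma>0$. One needs $k$ to be very small on large regions while $|k'|$ stays controlled in the $D_\alpha$ sense. I would first try $k=\exp(-u)$ with $u$ analytic and $\operatorname{Re}u\ge \log 2$. Then
\[
|k|^{-\theta}|k'|^2=e^{-(2-\theta)\operatorname{Re}u}\,|u'|^2
\qquad\text{versus}\qquad
|k'|^2=e^{-2\operatorname{Re}u}\,|u'|^2 .
\]
The gap between the exponents $2-\theta$ and $2$ is what should produce divergence in one integral and convergence in the other. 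Concretely, I would take $u$ to be a sum of bumps $\sum_j c_j\,\frac{1+\bar\zeta_j z}{1-\bar\zeta_j z}$, or a single $c\log\frac{e}{1-z}$-type singularity, tuned so that $\operatorname{Re}u\approx L_j$ on a Carleson box of side $\delta_j$. Each box then contributes about $e^{-(2-\theta)L_j}\delta_j^{\alpha}\,(\cdots)$ to one integral and $e^{-2L_j}\delta_j^{\alpha}\,(\cdots)$ to the other. The remaining task is to choose the sequences $L_j,\delta_j$ so that the first series diverges while the second converges. A possible further issue is the contribution of $|u'|^2$ far from the boxes, which must remain summable in $D_\alpha$. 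I expect the numerology here to be exactly what forces $\alpha+p>\frac12$, and this bookkeeping is the step I would work out in detail.
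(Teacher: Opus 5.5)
Your reduction is sound. In both cases it suffices to have, for $\beta=\frac p2$ (if $p<2$) or $\beta=\frac 2p$ (if $p>2$), a bounded zero-free $k\in A^2_\alpha$ with small supremum such that $k^\beta\notin A^2_\alpha$, i.e.\ $k\in A^2_\alpha\setminus A^{2\beta}_\alpha$. But that function is the entire content of the theorem. You do not construct it, and the two concrete candidates you name cannot work.

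If $u=\log 2+\sum_j c_j\frac{1+\bar\zeta_j z}{1-\bar\zeta_j z}$ with $c_j>0$, then $k=e^{-u}=\frac12 S_\mu$ with $\mu=\sum_j c_j\delta_{\zeta_j}$, and $k^\beta=2^{-\beta}S_{\beta\mu}$. By Theorem~\ref{theo:equiv_A} with $p=2$, $S_\mu\in A^2_\alpha$ if and only if $\int_\D(1-|S_\mu(z)|^2)(1-|z|^2)^{\alpha-2}\,dA(z)<\infty$. Since $1-|S_\mu|^{2\beta}\asymp 1-|S_\mu|^2$, the functions $k$ and $k^\beta$ lie in $A^2_\alpha$ simultaneously, whatever the $c_j$ and $\zeta_j$. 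This is the paper's remark that membership of an inner function does not depend on the exponent; for a single atom both hold exactly when $\alpha>\frac12$. So the gap between the exponents $2-\theta$ and $2$ never separates the two integrals for such $k$. Your other candidate, $\exp(-c\log\frac{e}{1-z})=e^{-c}(1-z)^c$, is the power function you had already excluded.

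The missing idea is to let an inner factor act on an outer factor that is only polynomially small, as in the paper's $f_{a,b}=(1-z)^b\exp(-a\frac{1+z}{1-z})$. Tangentially to $1$ one has $|S|\approx 1$ and $|S'/S|\asymp|1-z|^{-2}$, while $|f_{a,b}|\asymp|1-z|^b$, so the weights $|k|^{2\beta}$ and $|k|^2$ produce different thresholds.

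For $0<\alpha<\frac12$ this completes your argument. Take $b=\frac{1-2\alpha}{2\beta}>0$ and $k=\eta f_{1,b}$ with $0<\eta\le 2^{-b-1}$ small enough that $\sup_\D|k|\le\frac12$ is as small as you need. Then $k\in A^2_\alpha$ by Proposition~\ref{Prop:BS}\,(i), because $\beta<1$ gives $b>\frac12-\alpha$. Also $k\notin A^{2\beta}_\alpha$ by Remark~\ref{rem:fab}; the paper records the case $p>2$ in Remark~\ref{rem:RemaingCases}. Nothing here uses $\alpha+p>\frac12$. In the paper that hypothesis enters only through the absolute summability of $j^{-\frac34-\frac{\alpha+p}{2}}$ in Remark~\ref{rem:power_est}, not through any feature of the extremal function, so your expectation about its origin is misplaced.

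For $\frac12\le\alpha<1$, on the other hand, Remark~\ref{rem:fab} shows that every $f_{a,b}$ with $a>0$ and $b\ge 0$ (the bounded ones) lies either in all $A^q_\alpha$ or in none. Your constant-shift route would therefore need a genuinely different bounded $k$, which you do not provide.

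The paper avoids this by not insisting that one summand be constant. It adds $g_c=2^{b-c}(1-z)^c$ to $f_{1,b}$, with $b=\frac{1-2\alpha}{p}$ or $b$ given by \eqref{eqn:cnot}, which allows $f_{1,b}$ to be unbounded. It checks that $f_{1,b}+g_c$ is zero-free. It then decides term by term whether $(f_{1,b}+g_c)^{p/2}=2^{\frac{p(b-c)}{2}}\sum_{j\ge0}\binom{p/2}{j}2^{-j(b-c)}f_{j,\,j(b-c)+\frac{cp}{2}}$ belongs to $A^2_\alpha$. It places either $f_{1,b}$ itself (for $p<2$) or the $j=1$ term (for $p>2$) exactly at the threshold of Proposition~\ref{Prop:BS}.
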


        Note that $\| \cdot \|_{\alpha, p}$ is not a proper norm when $\alpha \in (0,1)$ and $p > \max \left \{ 0, \frac{1}{2} - \alpha \right\}$. Nevertheless, we will refer to it as a norm of $A^p_\alpha$ even when this class is not a linear space.

        Our main tool in the proof of Theorem \ref{theo:NonLinear} is an equivalent norm for the space $A^p_\alpha$ in the range $0<\alpha<1$, which is of its own interest. Note that $A^p_\alpha \subset  H^p$ if $\alpha<1$, hence every function $f$ in $A^p_\alpha$ has boundary values at almost every point of $\T := \partial \D$. For an integrable function $g$ on $\T$, let
            \[
            P_z(g):=\int_\T g(\xi)\frac{1-|z|^2}{|z-\xi|^2}~\frac{d\xi}{2\pi}
            \]
            denote its Poisson integral.
        
        \begin{theorem}
            \label{theo:equiv_A}
            Let $p>0$ and $0<\alpha<1$. A function $f$ in $H^p$ is in $A^p_\alpha$ if and only if 
                \[
                    \rho_{\alpha,p} (f) := \int_{\D}\left(P_z(|f|^p)-|f(z)|^p\right)(1-|z|^2)^{\alpha-2}\,dA(z)<\infty.
                \]
        \end{theorem}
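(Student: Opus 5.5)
The plan is to write $P_z(|f|^p)-|f(z)|^p$ as a Green potential of the Riesz measure of the subharmonic function $|f|^p$. Then integrate against the weight $(1-|z|^2)^{\alpha-2}$, swap the order of integration, and compare the result with the equivalent norm \eqref{eqn:equivnorm}.

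\emph{Green potential representation.} Let $G(z,w)=\log\left|\frac{1-\overline{w}z}{z-w}\right|$ be the Green function of $\D$. For $f\in\Hol$ one has, away from the zeros of $f$,
\[
\Delta |f|^p = p^2 |f|^{p-2}|f'|^2 .
\]
Near a zero of order $m$ this density behaves like $|z-z_0|^{mp-2}$, which is locally integrable. Hence the Riesz measure of $|f|^p$ is the absolutely continuous measure $\mu_f:=p^2|f|^{p-2}|f'|^2\,dA$, up to the normalisation constant of $dA$. For the dilations $f_r(z)=f(rz)$, which are holomorphic on a neighbourhood of $\overline{\D}$, Green's formula (the Riesz decomposition) gives
\[
P_z(|f_r|^p)-|f_r(z)|^p = c\int_{\D} G(z,w)\,d\mu_{f_r}(w).
\]
Now let $r\to1$. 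Since $f\in H^p$, we have $|f_r|^p\to|f|^p$ in $L^1(\T)$, so the left-hand side converges pointwise to $P_z(|f|^p)-|f(z)|^p$. On the right, $\mu_{f_r}$ is the pull-back of $\mu_f$ restricted to $r\D$, and $G$ is well behaved under this change of variables, so Fatou's lemma or monotone convergence gives the identity for $f$ itself. I expect this limiting argument to be routine but fiddly. In particular, the quantity $P_z(|f|^p)-|f(z)|^p$ is automatically nonnegative.

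\emph{Fubini.} Since everything is nonnegative, Tonelli gives
\[
\rho_{\alpha,p}(f)= c\int_{\D} \Phi_\alpha(w)\, |f(w)|^{p-2}|f'(w)|^2\,dA(w),
\qquad
\Phi_\alpha(w):=\int_{\D} G(z,w)(1-|z|^2)^{\alpha-2}\,dA(z).
\]
By \eqref{eqn:equivnorm}, it then suffices to prove that $\Phi_\alpha(w)\simeq(1-|w|^2)^{\alpha}$ uniformly in $w\in\D$.

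\emph{Estimating $\Phi_\alpha$.} This is the main, though elementary, step. Substitute $z=\varphi_w(\zeta)$. Then $G(z,w)=\log\frac{1}{|\zeta|}$, and
\[
1-|z|^2=\frac{(1-|w|^2)(1-|\zeta|^2)}{|1-\overline{w}\zeta|^2},
\qquad
dA(z)=\frac{(1-|w|^2)^2}{|1-\overline{w}\zeta|^4}\,dA(\zeta).
\]
Therefore
\[
\Phi_\alpha(w)=(1-|w|^2)^{\alpha}\int_{\D}\log\frac{1}{|\zeta|}\,(1-|\zeta|^2)^{\alpha-2}\,|1-\overline{w}\zeta|^{-2\alpha}\,dA(\zeta).
\]
\emph{Lower bound.} Use $|1-\overline{w}\zeta|\le 2$. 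The remaining integral $\int\log\frac1{|\zeta|}(1-|\zeta|^2)^{\alpha-2}dA$ is finite and positive: the logarithmic singularity at $0$ is integrable, and near $\T$ the integrand is $\simeq(1-|\zeta|)^{\alpha-1}$ with $\alpha>0$.

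\emph{Upper bound.} Split at $|\zeta|=1/2$. On $|\zeta|<1/2$ the factor $|1-\overline w\zeta|^{-2\alpha}$ is bounded. On $|\zeta|\ge1/2$, use $\log\frac1{|\zeta|}\lesssim 1-|\zeta|^2$. This reduces the estimate to
\[
\sup_{w\in\D}\int_{\D}(1-|\zeta|^2)^{\alpha-1}|1-\overline{w}\zeta|^{-2\alpha}\,dA(\zeta)<\infty .
\]
By the standard Forelli--Rudin estimates, this supremum is finite precisely because $2\alpha<(\alpha-1)+2$, that is, $\alpha<1$. This is exactly where the Dirichlet range $0<\alpha<1$ is used.

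Combining the two bounds gives $\rho_{\alpha,p}(f)\simeq\int_{\D}|f|^{p-2}|f'|^2(1-|w|^2)^{\alpha}\,dA$, and by \eqref{eqn:equivnorm} this is finite if and only if $f\in A^p_\alpha$.
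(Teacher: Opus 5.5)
Your proposal is correct and follows the paper's proof essentially step for step. Both arguments use Green's formula for $|f|^p$, Tonelli, the substitution $z=\varphi_w(\zeta)$, and a uniform bound on $\int_\D(1-|\zeta|^2)^{\alpha-1}|1-\overline{w}\zeta|^{-2\alpha}\,dA(\zeta)$ for $\alpha<1$. The paper phrases that bound as uniform boundedness of $(1-\overline{w}z)^{-\alpha}$ in $A^2_{\alpha+1}$, and it uses Chebyshev's inequality where you split at $|\zeta|=1/2$.

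The paper skips the $r\to1$ limit entirely. In your sketch, Fatou's lemma only gives $c\int_\D G(z,w)\,d\mu_f(w)\le P_z(|f|^p)-|f(z)|^p$, which suffices for $\rho_{\alpha,p}(f)<\infty\Rightarrow f\in A^p_\alpha$. The reverse inequality does not follow from Fatou, and monotone convergence does not directly apply to $G(z,w/r)$, since it is not monotone in $r$ for $z\neq0$. For the converse, either apply Green's formula on $r\D$, whose Green functions increase to $G$ so that monotone convergence applies, or use the classical fact that $P(|f|^p)$ is the least harmonic majorant of $|f|^p$ when $f\in H^p$.
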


        This result has to be understood as an analogue of equivalent norms for the Besov spaces deduced by Böe \cite{Boe03} and Dyakonov \cite{Dyakonov98}.
            
        Theorem \ref{theo:equiv_A} is used throughout this note to show that $A^p_\alpha$ spaces enjoy interesting properties even in the Dirichlet range, despite Theorem \ref{theo:NonLinear}. Suppose that $0<\alpha<1$. Since $A^p_\alpha\subset H^p$, any $f$ in $A^p_\alpha$ has an inner-outer factorization $f=\Theta h$.
        
        As an immediate corollary of Theorem \ref{theo:equiv_A}, one recovers the property that if $f$ is in $A^p_\alpha$ and $\Theta$ is an inner factor that divides $f$, then $f/\Theta$ is also in $A^p_\alpha$ (\cite[Theorem 1.8]{brevig2025contractivehardylittlewoodinequalitiesdirichlet}). Moreover, by using the inner-outer factorization in $A^p_\alpha$ and the equivalent norm from Theorem \ref{theo:equiv_A}, we provide a more exhaustive version of \eqref{eqn:nonvanishing} which does not require that $f$ is non-vanishing:
        
        \begin{theorem}
        \label{theo:p/2}
            Let $0<\alpha<1$, $p>0$, and $f=\Theta h$ a function in $H^p$, $\Theta$ and $h$ being its inner and outer factors, respectively. Then $f$ belongs to $A^p_\alpha$ if and only if $\Theta h^\frac{p}{2}$ is in $A^2_\alpha$. More generally, $f \in A^p_\alpha$ if and only if $\Theta h^{\frac{p}{q}} \in A^q_\alpha$, where $q$ is another positive exponent.
        \end{theorem}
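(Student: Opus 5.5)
Theorem \ref{theo:p/2} follows from Theorem \ref{theo:equiv_A} by splitting $\rho_{\alpha,p}(f)$ into an "inner part" and an "outer part". Write $f=\Theta h$ with $|\Theta|=1$ a.e. on $\T$. The boundary values satisfy $|f|^p=|h|^p$ a.e. on $\T$. Since $h$ is outer, $\log|h|$ is the Poisson integral of its boundary values. Hence
\[
P_z(|f|^p)-|f(z)|^p=\bigl(P_z(|h|^p)-|h(z)|^p\bigr)+|h(z)|^p\bigl(1-|\Theta(z)|^p\bigr),
\]
and both summands are nonnegative: the first by subharmonicity of $|h|^p$ (or Jensen, as $|h|^p=e^{p\log|h|}$ with $\log|h|$ harmonic), the second trivially. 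Therefore
\[
\rho_{\alpha,p}(f)\simeq \rho_{\alpha,p}(h)+\int_{\D}|h(z)|^p\bigl(1-|\Theta(z)|^p\bigr)(1-|z|^2)^{\alpha-2}\,dA(z),
\]
with equality in fact. So $f\in A^p_\alpha$ if and only if both terms are finite. (This also gives the recovered divisibility statement $h\in A^p_\alpha$.)

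Next, let $F=\Theta h^{p/q}$, which lies in $H^q$ because $|h^{p/q}|^q=|h|^p$ on $\T$. Its inner factor is $\Theta$ and its outer factor is $h^{p/q}$, which is well defined since $h$ is zero-free. Applying the same decomposition to $F$ with exponent $q$, we note that the boundary function $|F|^q=|h|^p$ and the interior function $|h^{p/q}(z)|^q=|h(z)|^p$ coincide with those for $f$ and exponent $p$. The only change is that $1-|\Theta(z)|^p$ is replaced by $1-|\Theta(z)|^q$. Thus
\[
\rho_{\alpha,q}(F)=\rho_{\alpha,p}(h)+\int_{\D}|h(z)|^p\bigl(1-|\Theta(z)|^q\bigr)(1-|z|^2)^{\alpha-2}\,dA(z).
\]

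It remains to compare $1-t^p$ with $1-t^q$ for $t=|\Theta(z)|\in[0,1]$. The ratio $(1-t^p)/(1-t^q)$ is continuous on $[0,1)$ and tends to $p/q$ as $t\to1$. It is therefore bounded above and below by positive constants depending only on $p$ and $q$, so $1-t^p\simeq 1-t^q$ uniformly on $[0,1]$. Hence $\rho_{\alpha,p}(f)\simeq\rho_{\alpha,q}(F)$. By Theorem \ref{theo:equiv_A}, used once with exponent $p$ and once with exponent $q$, $f\in A^p_\alpha$ if and only if $F\in A^q_\alpha$. The case $q=2$ gives the first assertion.

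The main point needing care is the identity $P_z(|f|^p)=P_z(|h|^p)$ together with the nonnegativity of $P_z(|h|^p)-|h(z)|^p$, which uses only that $h\in H^p$. Beyond that, Theorem \ref{theo:equiv_A} carries the weight, and the argument is essentially bookkeeping.
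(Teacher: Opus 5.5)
Your proof is correct. It uses the same two ingredients as the paper: Theorem~\ref{theo:equiv_A}, and the splitting
\[
P_z(|\Theta g|^p)-|\Theta(z)g(z)|^p=\bigl(P_z(|g|^p)-|g(z)|^p\bigr)+\bigl(1-|\Theta(z)|^p\bigr)|g(z)|^p
\]
from the proof of Corollary~\ref{coro:inner_outer}. The two arguments differ in how they handle the outer factor.

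The paper argues in three qualitative steps:
\begin{enumerate}
\item $f\in A^p_\alpha$ gives $h\in A^p_\alpha$ by Theorem~\ref{theo:equiv_A}.
\item $h^{p/2}\in A^2_\alpha$ by \eqref{eqn:nonvanishing}. This rests on the Hardy--Stein form \eqref{eqn:equivnorm} of the norm together with $|h|^{p-2}|h'|^2=\tfrac{4}{p^2}|(h^{p/2})'|^2$.
\item $\Theta$ is reattached via Corollary~\ref{coro:inner_outer}.
\end{enumerate}

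You instead observe that $\rho_{\alpha,q}(h^{p/q})=\rho_{\alpha,p}(h)$ holds exactly, because $|h^{p/q}|^q=|h|^p$ both on $\T$ and in $\D$. So you never need \eqref{eqn:nonvanishing} and you work entirely within the Poisson-integral description.

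Your route gains three things:
\begin{itemize}
\item a two-sided estimate $\rho_{\alpha,p}(\Theta h)\simeq\rho_{\alpha,q}(\Theta h^{p/q})$, with constants depending only on $p$ and $q$;
\item every exponent $q$ at once, whereas the paper writes out only $q=2$;
\item an explicit proof that $1-t^p\simeq 1-t^q$ on $[0,1]$. The paper uses this comparison tacitly: Corollary~\ref{coro:inner_outer} is stated with $1-|\Theta|$, while its proof produces $1-|\Theta|^p$.
\end{itemize}
The paper's version is shorter on the page because it reuses \eqref{eqn:nonvanishing} and Corollary~\ref{coro:inner_outer} as black boxes.
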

        
        Note that, by setting $h=1$ in Theorem \ref{theo:p/2}, we can conclude that the membership of an inner function in $A^p_\alpha$ does not depend on $p$. Recall that all inner functions belong to $A^2_\alpha$ for $\alpha\ge1$, but this is not the case in the Dirichlet range. For instance, the atomic singular inner function belongs to $A^2_\alpha$ if and only if $\alpha> 1/2$, \cite{NS62}. Moreover, Theorem \ref{theo:p/2} and \eqref{eqn:nonvanishing} yield that a singular inner function $S_\mu$ is in $A^p_\alpha$ if and only if any of its powers is.

        Theorem \ref{theo:p/2} has consequences for the study of weak products of Dirichlet-type spaces, a description of which is, to this day, missing (see \cite[Question 4.7]{AHMR21}). Given a Hilbert space $\rkhs$ of functions on a domain, its weak product $\rkhs\odot\rkhs$ is the vector space of those functions that can be written as
            \begin{equation}
            \label{eqn:wp_rep}
                h=\sum_{i=1}^N f_ig_i, \qquad f_i, g_i\in\rkhs.
            \end{equation}
            The infimum of all values of $\sum_i\|f_i\|_\rkhs\|g_i\|_\rkhs$ among all the representations of $h$ in \eqref{eqn:wp_rep} is a norm on $\rkhs\odot\rkhs$. It is well known that $H^2\odot H^2=H^1$. Moreover, the fact that any function in $H^1$ is the product of two $H^2$ functions says that $N$ can be chosen to be equal to $1$ in \eqref{eqn:wp_rep} for all $h$ in $H^1$. This has been recently proved to be true for all complete Pick spaces, as a consequence of the column-row property, \cite[Theorem 1.4]{Hartz23}. As a corollary of Theorem \ref{theo:p/2}, we show that $A^1_\alpha$ is contained in the weak product of $A^2_\alpha$, thus providing a concrete function theoretical sufficient condition for the membership of a function in the weak product of a Dirichlet-type space:
            
        \begin{corollary}
        \label{coro:wp}
            Let $0<\alpha<1$. Then any $f$ in $A^1_\alpha$ is the product of two functions in $A^2_\alpha$. 
        \end{corollary}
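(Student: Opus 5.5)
Let $f\in A^1_\alpha$. Since $0<\alpha<1$, we have $A^1_\alpha\subset H^1$, so $f$ has an inner-outer factorization $f=\Theta h$ with $\Theta$ inner and $h$ outer in $H^1$. The plan is to split $f$ as
\[
f = \big(\Theta h^{1/2}\big)\cdot h^{1/2},
\]
where $h^{1/2}$ is a holomorphic square root of $h$; it exists because $h$ is outer and hence zero-free in $\D$. Both factors then have to be shown to lie in $A^2_\alpha$.

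\textbf{First factor.} By Theorem \ref{theo:p/2} with $p=1$, $f\in A^1_\alpha$ if and only if $\Theta h^{1/2}\in A^2_\alpha$. So $\Theta h^{1/2}\in A^2_\alpha$ immediately.

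\textbf{Second factor.} Since $\Theta$ divides $f$, the divisibility property (\cite[Theorem 1.8]{brevig2025contractivehardylittlewoodinequalitiesdirichlet}, which also follows from Theorem \ref{theo:equiv_A}) gives $h=f/\Theta\in A^1_\alpha$. Alternatively, apply Theorem \ref{theo:p/2} to the outer function $h$, whose inner factor is $1$. Because $h$ does not vanish in $\D$, \eqref{eqn:nonvanishing} with $p=1$ gives $h^{1/2}\in A^2_\alpha$. Equivalently, Theorem \ref{theo:p/2} applied to $h$ with $\Theta=1$ gives the same conclusion.

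\textbf{Main check.} The only point needing care is that divisibility by inner factors preserves membership, i.e.\ that $h=f/\Theta\in A^1_\alpha$. I would verify this via Theorem \ref{theo:equiv_A}. On $\T$ we have $|h|=|f|$ almost everywhere, since $|\Theta|=1$ a.e. In $\D$ we have $|h(z)|\ge |f(z)|$, since $|\Theta|\le1$. Hence
\[
P_z(|h|)-|h(z)| \le P_z(|f|)-|f(z)|,
\]
and so $\rho_{\alpha,1}(h)\le\rho_{\alpha,1}(f)<\infty$. This gives $h\in A^1_\alpha$.

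Putting the two factors together, $f$ is the product of two functions in $A^2_\alpha$. The norm estimates come from the equivalence constants implicit in Theorem \ref{theo:p/2}.
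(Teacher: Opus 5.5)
Your proposal is correct and takes essentially the paper's route: the same factorization $f=(\Theta\sqrt h)\sqrt h$, with Theorem \ref{theo:p/2} handling the first factor and the inner-divisibility consequence of Theorem \ref{theo:equiv_A} handling the second. The only difference is where the division by $\Theta$ happens. You divide $f$ in $A^1_\alpha$ and then apply \eqref{eqn:nonvanishing}, while the paper divides $\Theta\sqrt h$ in $A^2_\alpha$; this is immaterial.
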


        Note that Theorem \ref{theo:NonLinear} implies that the inclusion $A^1_\alpha\subset A^2_\alpha\odot A^2_\alpha$ must be proper.

        This manuscript is organized as follows. Section \ref{sec:equiv} is devoted to the proof of Theorem \ref{theo:equiv_A} and its consequences to the inner-outer factorization for $A^p_\alpha$ functions in the Dirichlet range, by proving Theorem \ref{theo:p/2} and Corollary \ref{coro:wp}. As a by-product of our arguments, we show that for $0<\alpha<1$ any function in $A^p_\alpha$ is the quotient of two bounded functions in $A^p_\alpha$, where the denominator is outer (see Theorem \ref{theo:quotient}). It is worth mentioning that this is already known for $p=2$, \cite{AHMR17}, since for $0<\alpha\le1$ the space $A^2_\alpha$ has the complete Pick property. In fact, in that case the denominator can be even chosen to be cyclic in $A^2_\alpha$, and both the numerator and the denominator are multipliers of $A^2_\alpha$.
        
        Section \ref{sec:NonLinear} uses the properties of $A^p_\alpha$ functions from Section \ref{sec:equiv} in order to prove Theorem \ref{theo:NonLinear}. This is achieved by adapting some techniques of Brown and Shields from \cite{BS84} in order to study the membership of a concrete class of functions in $A^p_\alpha$, see Proposition \ref{Prop:BS}.
        
        Finally, Section \ref{sec:cm} provides a proof, alternative to the one in \cite{brevig2025contractivehardylittlewoodinequalitiesdirichlet}, that the index of conformal invariance of $A^p_\alpha$ is $\frac{\alpha}{p}$ also for $0<\alpha<1$. Our proof uses Carleson measures for Dirichlet-type spaces and it can be extended to determine the index of conformal invariance of Besov spaces, which were recently found in \cite{AM21}, see Remark \ref{rem:AM}.

        \subsection*{Acknowledgments}

            We would like to express our gratitude to Ole Fredrik Brevig for his comments on the early drafts of this manuscript.

    \section{An Equivalent Norm on $A^p_\alpha$ in the Dirichlet range}
    \label{sec:equiv}

        Theorem \ref{theo:equiv_A} is a consequence of sub-harmonicity. If $u$ is sub-harmonic and $u\in C^2(\overline\D)$, Green's formula yields
            \begin{equation}
            \label{eqn:Poisson_sub}
                P_{z}(u)-u(z)=\int_\D G_z(w)\Delta u(w)\,dA(w),
            \end{equation}
            where 
            \[
                G_z(w)\coloneqq\log\left|\frac{1-\overline{z}w}{w-z}\right|, \qquad z, w\in\D
            \]
            is the Green function of the unit disc with pole at $w$. Recall that for any analytic function $f$, then $|f|^p$ is sub-harmonic for all $p>0$.

        \begin{proof}[Proof of Theorem \ref{theo:equiv_A}]
            We apply \eqref{eqn:Poisson_sub} with $u=|f|^p$. Since $\Delta(|f|^p)=p^2 |f'|^2|f|^{p-2}$, we obtain
                \[
                    \begin{split}
                        &\int_\D\left(P_z(|f|^p)-|f(z)|^p\right)(1-|z|^2)^{\alpha-2}\,dA(z)\\
                        =&\int_\D \left(\int_\D G_z(w)\Delta(|f|^p)(w)\,dA(w) \right)(1-|z|^2)^{\alpha-2}\,dA(z)\\
                        \simeq&\int_\D|f'(w)|^2|f(w)|^{p-2} \left(\int_\D G_w(z)(1-|z|^2)^{\alpha-2}\,dA(z) \right) dA(w).
                    \end{split}
                \]
                We are therefore left to show that
                \begin{equation}
                \label{eqn:comparableweights}
                    \int_\D G_w(z)(1-|z|^2)^{\alpha-2}\,dA(z)\simeq (1-|w|)^\alpha.
                \end{equation}
                Using a conformal change of variables, the left side of \eqref{eqn:comparableweights} can be rewritten as
                \[
                    \int_\D \log \left( \dfrac{1}{|z|}\right) (1 - |\varphi_w (z)|^2)^\alpha \dfrac{dA(z)}{(1 - |z|^2)^2} = (1 - |w|^2)^\alpha \int_\D \log \left( \dfrac{1}{|z|}\right)  \dfrac{(1 - |z|^2)^{\alpha - 2}}{|1 - \overline{w} z|^{2\alpha}} dA(z).
                \]
                Now using that $r \mapsto \log\big(\tfrac{1}{r}\big)/(1-r^2)$ is a decreasing function in $(0,1)$, Chebyshev's inequality (see \cite{MR1505400}) yields that the quantities
                \[
                   \int_\D |f(z)|^p (1 - |z|^2)^{\alpha - 2} \log \left( \frac{1}{|z|} \right) \, dA(z) 
                \]
                and
                \[
                    \int_\D |f(z)|^p (1 - |z|^2)^{\alpha - 1} \, dA(z)
                \]
                are comparable for every $f \in \Hol$. Therefore, we are left to observe that the functions
                \[
                    f_w(z)=\frac{1}{(1-\overline wz)^{\alpha}}
                \]
                have bounded norms in $A^2_{\alpha+1}$, uniformly in $w$, if $0<\alpha<1$. This holds because of the fact that $f_1 \in A^2_{\alpha + 1}$ if and only if $\alpha < \frac{\alpha + 1}{2}$. That is, when $\alpha < 1$.
        \end{proof}

        For all $p>0$ and $\alpha<1$, any function $f$ in $A^p_a$ is also in $H^p$. In particular, any such $f$ can be written as $f=\Theta h$, where $\Theta$ is inner and $h$ is outer.
 
        \begin{corollary}
        \label{coro:inner_outer}
            Let $0<\alpha<1$ and $p>0$. Given any function $f$ in $A^p_\alpha$ and any inner function $\Theta$, the product $\Theta f$ belongs to $A^p_\alpha$ if and only if
            \[
                \int_{\D}(1-|\Theta(z)|)|f(z)|^p(1-|z|^2)^{\alpha-2}\,dA(z)<\infty.
            \]
        \end{corollary}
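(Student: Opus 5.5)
The plan is to apply Theorem \ref{theo:equiv_A} to $\Theta f$ and compare the result with $\rho_{\alpha,p}(f)$. Since $|\Theta|=1$ a.e.\ on $\T$, the boundary values satisfy $|\Theta f|^p=|f|^p$ a.e., so $P_z(|\Theta f|^p)=P_z(|f|^p)$ for every $z\in\D$. This gives the pointwise identity
\[
P_z(|\Theta f|^p)-|\Theta(z)f(z)|^p=\big(P_z(|f|^p)-|f(z)|^p\big)+\big(1-|\Theta(z)|^p\big)|f(z)|^p .
\]
Both terms on the right are nonnegative: the first by subharmonicity of $|f|^p$, the second because $|\Theta|\le1$. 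I will multiply by $(1-|z|^2)^{\alpha-2}$ and integrate over $\D$. This is legitimate, since $\Theta f\in H^p$ and every integrand is nonnegative. The result is
\[
\rho_{\alpha,p}(\Theta f)=\rho_{\alpha,p}(f)+\int_\D\big(1-|\Theta(z)|^p\big)|f(z)|^p(1-|z|^2)^{\alpha-2}\,dA(z).
\]

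Because $f\in A^p_\alpha$, Theorem \ref{theo:equiv_A} gives $\rho_{\alpha,p}(f)<\infty$. Applying the same theorem to $\Theta f\in H^p$, membership $\Theta f\in A^p_\alpha$ is therefore equivalent to finiteness of the last integral. Both directions of the corollary come out at once, since all quantities are nonnegative and no cancellation can occur.

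It remains to replace $1-|\Theta|^p$ by $1-|\Theta|$. For $t\in[0,1]$ and $p>0$ there are constants $c_p,C_p>0$ with
\[
c_p(1-t)\le 1-t^p\le C_p(1-t).
\]
This holds because $(1-t^p)/(1-t)$ is continuous and positive on $[0,1)$ and tends to $p$ as $t\to1$, so it is bounded above and below on $[0,1)$. Applying this with $t=|\Theta(z)|$ shows the two integrals are comparable, which finishes the argument.

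I do not expect a serious obstacle. The only points needing care are that $\Theta f\in H^p$, so that Theorem \ref{theo:equiv_A} applies to it, and that the pointwise splitting involves only nonnegative terms, so that integrating term by term is justified.
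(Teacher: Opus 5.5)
Your proposal is correct and follows the paper's own proof: the same pointwise identity $P_z(|\Theta f|^p)-|\Theta(z) f(z)|^p=\big(P_z(|f|^p)-|f(z)|^p\big)+(1-|\Theta(z)|^p)|f(z)|^p$, integrated and combined with Theorem \ref{theo:equiv_A}. You also make explicit two points the paper leaves implicit: the comparability $1-t^p\simeq 1-t$ on $[0,1]$, needed to pass from $1-|\Theta|^p$ to $1-|\Theta|$ in the statement, and the nonnegativity of both terms, which justifies integrating them separately.
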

        
        \begin{proof}
            This follows from Theorem \ref{theo:equiv_A} and the fact that 
            \[
                P_z(|\Theta f|^p)-|\Theta(z)f(z)|^p=P_z(|f|^p)-|f(z)|^p+(1-|\Theta(z)|^p)|f(z)|^p
            \]
            for all $z$ in $\D$.
        \end{proof}

        We are ready to prove Theorem \ref{theo:p/2}:
        
        \begin{proof}[Proof of Theorem \ref{theo:p/2}]
            If $\Theta$ is constant then $f$ does not vanish, and the results follows directly from \eqref{eqn:nonvanishing}.
        
            If $f=\Theta h$ is in $A^p_\alpha$, then $h$ is in $A^p_\alpha$ thanks to Theorem \ref{theo:equiv_A}. Therefore $h^\frac{p}{2}$ is in $A^2_{\alpha}$, by \eqref{eqn:nonvanishing}. Corollary \ref{coro:inner_outer} yields that $\Theta h^\frac{p}{2}$ is in $A^2_{\alpha}$. The same argument yields the other direction of the claim.
        \end{proof}

        This yields Corollary \ref{coro:wp}:
        
        \begin{proof}[Proof of Corollary \ref{coro:wp}]
            Let $f=\Theta h$ be in $A^1_\alpha$. By Theorem \ref{theo:p/2}, $\Theta\sqrt h$ is in $A^2_\alpha$, and so is $\sqrt{h}$ by Theorem \ref{theo:equiv_A}. Since $f=(\Theta\sqrt h)\sqrt h$, the claim follows.
        \end{proof}

        Finally, we use the inner-outer factorization of a function $f$ in $A^p_\alpha$ to show that any such function is the quotient of two bounded functions in $A^p_\alpha$. To see this, note that, given an outer function $h$, one can obtain two truncated outer functions as follows: 
            \[
                h_{\mathrm{min}}(z):=\exp\left\{\int_\T\frac{\xi+z}{\xi-z}\log\min\{1, |h(\xi)|\}\,d\xi\right\}
            \]
            and 
            \[
                h_{\mathrm{max}}(z):=\exp\left\{\int_\T\frac{\xi+z}{\xi-z}\log\max\{1, |h(\xi)|\}\,d\xi\right\}.
            \]
            In particular, $h_{\mathrm{min}}$ is bounded by $1$, $h_{\mathrm{max}}$ is bounded below by $1$ and $h=h_{\mathrm{min}}h_{\mathrm{max}}$. This yields that any function $f=\Theta h$ in $A^p_\alpha$ can be written as 
            \begin{equation}
            \label{eqn:quotient}
                f(z)=\frac{\Theta(z)h_{\mathrm{min}}(z)}{h^{-1}_\mathrm{max}(z)}, \qquad z\in\D,
            \end{equation}
            where both the numerator and the denominator in \eqref{eqn:quotient} are bounded.
            
        \begin{theorem}
            \label{theo:quotient}
            Let $p>0$, $0<\alpha<1$ and let $f=\Theta h$ be a function in $A^p_\alpha$. Then $\Theta h_\mathrm{min}$,  $h_\mathrm{max}$ and $h^{-1}_\mathrm{max}$ are in $A^p_\alpha$. In particular, any function $f$ in $A^p_\alpha$ can be written as the quotient of two bounded functions in $A^p_\alpha$, where the denominator is outer.
        \end{theorem}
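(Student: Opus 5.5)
We reduce everything to the Poisson-integral characterization of Theorem \ref{theo:equiv_A} together with Corollary \ref{coro:inner_outer}. First, since $f=\Theta h\in A^p_\alpha$, Theorem \ref{theo:equiv_A} (equivalently \cite[Theorem 1.8]{brevig2025contractivehardylittlewoodinequalitiesdirichlet}) gives $h\in A^p_\alpha$. By \eqref{eqn:nonvanishing}, applied to the non-vanishing functions $h$, $h_{\mathrm{min}}$, $h_{\mathrm{max}}$, $h_{\mathrm{max}}^{-1}$, it then suffices to work with $u:=h^{p/2}\in A^2_\alpha$. We use that $(h_{\mathrm{min}})^{p/2}=(h^{p/2})_{\mathrm{min}}$ and similarly for the max part. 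Thus the claims for $h_{\mathrm{min}}$, $h_{\mathrm{max}}$ and $h_{\mathrm{max}}^{-1}$ reduce to the Hilbert case $p=2$. In fact we can argue directly for general $p$ through $\rho_{\alpha,p}$.

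The key step is a pointwise comparison of the integrands $P_z(|g|^p)-|g(z)|^p$ for $g\in\{h_{\mathrm{min}},h_{\mathrm{max}}\}$ with the one for $h$. Write $v=\log|h|$ on $\T$, $v_-=\min\{v,0\}$ and $v_+=\max\{v,0\}$, so that $\log|h_{\mathrm{min}}(z)|=P_z(v_-)$ and $\log|h_{\mathrm{max}}(z)|=P_z(v_+)$. Then
\[
P_z(|h|^p)-|h(z)|^p=P_z(e^{pv})-e^{pP_z(v)},
\]
which is a Jensen gap for the probability measure $P_z\,d\xi/2\pi$ and the convex function $t\mapsto e^{pt}$. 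We will show that the Jensen gaps of $v_+$ and of $v_-$ are each bounded by the gap of $v$. The cleanest way is the Green-formula representation \eqref{eqn:Poisson_sub}, or equivalently the variance-type identity
\[
P_z(e^{pv})-e^{pP_z v}=\iint \Phi\bigl(v(\xi),v(\eta)\bigr)\,dP_z(\xi)\,dP_z(\eta)
\]
for a suitable nonnegative kernel. Since truncation $t\mapsto\max\{t,0\}$ is a contraction, the corresponding double-integral expression should decrease. We expect this to be the main obstacle: the Jensen gap is not literally monotone under truncation, because the exponential weights differ. For $v_-$ we would try the elementary inequality $P(e^{pv_-})-e^{pPv_-}\le P(e^{pv})-e^{pPv}$ and verify it via the mean value form $e^{a}-e^{b}\ge e^{b}(a-b)$ applied pointwise on the sets $\{v>0\}$ and $\{v\le0\}$. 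If a pointwise bound fails, we settle for a bound up to constants, or up to an integrable error, which is enough after integrating against $(1-|z|^2)^{\alpha-2}$.

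For $h_{\mathrm{max}}^{-1}$ we use that $|h_{\mathrm{max}}^{-1}|\le1$ and $|h_{\mathrm{max}}|\ge1$. By \eqref{eqn:equivnorm},
\[
|g^{-1}|^{p-2}|(g^{-1})'|^2=|g|^{-p-2}|g'|^2\le|g|^{p-2}|g'|^2
\]
when $|g|\ge1$, so $h_{\mathrm{max}}\in A^p_\alpha$ implies $h_{\mathrm{max}}^{-1}\in A^p_\alpha$ directly.

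Finally, for $\Theta h_{\mathrm{min}}$ we apply Corollary \ref{coro:inner_outer}. We need
\[
\int_\D(1-|\Theta|)|h_{\mathrm{min}}|^p(1-|z|^2)^{\alpha-2}\,dA<\infty .
\]
If $\Theta h\in A^p_\alpha$, the same corollary gives finiteness with $|h|^p$ in place of $|h_{\mathrm{min}}|^p$. We have $|h_{\mathrm{min}}|\le \min\{1,|h|\}\cdot$ nothing better in general. However, on $\{|h(z)|\ge1\}$ we have $|h_{\mathrm{min}}|\le1$, and there we use $1-|\Theta|\lesssim P_z(|h|^p)-|\Theta h|^p$ restricted suitably. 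Alternatively we bound $|h_{\mathrm{min}}(z)|^p\le|h(z)|^p+\bigl(P_z(|h|^p)-|h(z)|^p\bigr)$, which follows from $|h_{\mathrm{min}}|^p\le P_z(\min\{1,|h|\}^p)\le P_z(|h|^p)$. This yields an integrand bounded by $(1-|\Theta|)|h|^p+\bigl(P_z(|h|^p)-|h|^p\bigr)$, and both terms are integrable. The quotient statement then follows from \eqref{eqn:quotient}.
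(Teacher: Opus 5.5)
Your handling of $h_{\mathrm{max}}^{-1}$ is correct: it uses $|g|^{-p-2}|g'|^2\le|g|^{p-2}|g'|^2$ when $|g|\ge1$, together with \eqref{eqn:equivnorm}. Your handling of $\Theta h_{\mathrm{min}}$ through Corollary \ref{coro:inner_outer} is also correct, and it could be shortened by noting $|h_{\mathrm{min}}|=|h|/|h_{\mathrm{max}}|\le|h|$ in $\D$. Both steps, however, presuppose the core claim that $h\in A^p_\alpha$ forces $h_{\mathrm{min}},h_{\mathrm{max}}\in A^p_\alpha$, and \emph{you never establish it}.

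Your reduction to $p=2$ via $(h^{p/2})_{\mathrm{min}}=(h_{\mathrm{min}})^{p/2}$ is exactly the paper's route. But the $p=2$ statement is itself a nontrivial theorem of B\"oe \cite[Thm. 3.3, 3.4]{Boe03}, which the paper invokes and you neither cite nor prove. Your direct route is only a plan:
\begin{itemize}
\item the kernel $\Phi$ is never exhibited, and no such bilinear representation of the exponential Jensen gap exists, since the gap is not quadratic in the weights of the measure;
\item for $v_+$ you rely solely on that kernel;
\item the mean-value sketch for $v_-$ is not carried out;
\item the fallback ``up to constants, or up to an integrable error'' is not an argument.
\end{itemize}

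Your worry that the Jensen gap is not monotone under truncation is in fact unfounded: the comparison holds pointwise, via an exact identity. Fix $z\in\D$ and set
\[
A=P_z(e^{pv_+}),\qquad B=P_z(e^{pv_-}),\qquad a=e^{pP_z(v_+)}=|h_{\mathrm{max}}(z)|^p,\qquad b=e^{pP_z(v_-)}=|h_{\mathrm{min}}(z)|^p .
\]
At each boundary point one of $v_\pm$ vanishes, so $e^{pv}=e^{pv_+}+e^{pv_-}-1$. Hence $P_z(|h|^p)=A+B-1$, while $|h(z)|^p=ab$. Therefore
\[
P_z(|h|^p)-|h(z)|^p=(A-a)+(B-b)+(a-1)(1-b).
\]
Here $A-a$ and $B-b$ are exactly the integrands of $\rho_{\alpha,p}(h_{\mathrm{max}})$ and $\rho_{\alpha,p}(h_{\mathrm{min}})$, and both are nonnegative by Jensen. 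The last term satisfies $(a-1)(1-b)\ge0$ because $a\ge1\ge b$. So each of these two integrands is dominated pointwise by the integrand of $\rho_{\alpha,p}(h)$.

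Since $h_{\mathrm{min}},h_{\mathrm{max}}\in H^p$, Theorem \ref{theo:equiv_A} then gives $h_{\mathrm{min}},h_{\mathrm{max}}\in A^p_\alpha$ for every $p>0$ at once. With this lemma inserted, the rest of your argument goes through. The result is a self-contained proof that avoids both B\"oe's theorem and the transfer through Theorem \ref{theo:p/2}.
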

        
        \begin{proof}
            For $p=2$, this is proved in \cite[Thm. 3.3, 3.4]{Boe03}. For other values of $p$, it follows from the case $p=2$, from Theorem \ref{theo:p/2} and the fact that 
            \[
                h_\mathrm{min}^\frac{p}{2}=\left(h_\mathrm{min}\right)^\frac{p}{2}\quad \mbox{ and } \quad h_\mathrm{max}^\frac{p}{2}=\left(h_\mathrm{max}\right)^\frac{p}{2}.
            \]
        \end{proof}

    \section{Membership of a concrete class of functions in $A^p_\alpha$}
    \label{sec:NonLinear}

        We adapt a construction from \cite[Prop. 16]{BS84} to weighted Dirichlet spaces. We include the proof for completeness, since we will need a more quantitative statement (see Remark \ref{rem:power_est} below).
    
        \begin{proposition} \label{Prop:BS}
            Let $a \ge 0$, $b \in \R$ and $0<\alpha<1$. Consider the function
                \[
                    f_{a, b}(z) := (1-z)^b\exp\left(-a\frac{1+z}{1-z}\right).
                \]
                Then:
                \begin{enumerate}
                    \item \label{Propi)} $f_{a, b}$ belongs to $A_\alpha^2$ if $b > \frac{1}{2} - \alpha$.
                    
                    \item \label{Propii)} If $b>-a$ and $b\le\frac{1}{2}-\alpha$, then $f$ does not belong to $A^p_\alpha$.
                \end{enumerate}
        \end{proposition}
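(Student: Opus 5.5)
The plan is to estimate the Dirichlet-type integral in \eqref{eqn:equivnorm} directly, using coordinates adapted to the singularity at $z=1$. Write $t=|1-z|$ and $s=1-|z|$. Near $1$ we have $s\lesssim t$, and the region $\{|1-z|\approx t\}$ has area element comparable to $dt\,ds$ with $0\le s\lesssim t$. The key identity is
\[
\Big|\exp\Big(-a\frac{1+z}{1-z}\Big)\Big|=\exp\Big(-a\frac{1-|z|^2}{|1-z|^2}\Big)\approx e^{-2as/t^2},
\]
together with
\[
f_{a,b}'(z)=-(1-z)^{b-2}\exp\Big(-a\frac{1+z}{1-z}\Big)\big(b(1-z)+2a\big).
\]
Away from $z=1$ the function $f_{a,b}$ is holomorphic across the circle, so only a neighbourhood of $1$ matters.

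\textbf{Part (i).} Since $|b(1-z)+2a|\lesssim t+a$, I would bound
\[
|f'_{a,b}|^2\lesssim t^{2b-2}e^{-4as/t^2}+a^2t^{2b-4}e^{-4as/t^2}.
\]
For the first term I drop the exponential and get $\int t^{2b-2}\int_0^t s^\alpha\,ds\,dt\approx\int t^{2b+\alpha-1}\,dt$. This is finite because $b>\tfrac12-\alpha>-\tfrac\alpha2$, using $\alpha<1$. For the second term (when $a>0$), I would use
\[
\int_0^t s^\alpha e^{-4as/t^2}\,ds\lesssim_a t^{2\alpha+2},
\]
obtained by substituting $s=t^2u$. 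This gives $\int t^{2b+2\alpha-2}\,dt$, which is finite exactly when $b>\tfrac12-\alpha$. By \eqref{eqn:equivnorm} with $p=2$, $f_{a,b}\in A^2_\alpha$.

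\textbf{Part (ii).} Here $a>0$ is forced: if $a=0$ the condition $b>-a$ gives $b>0$, and then $(1-z)^b\in A^2_\alpha$ by the computation above. So I would assume $a>0$. The idea is to bound the integrand from below on the parabolic shell
\[
R=\{t^2\le s\le 2t^2,\ t<\delta\}.
\]
On $R$ the exponential factor is $\approx e^{-2as/t^2}\ge e^{-4a}$, so it is comparable to a constant. Also $|b(1-z)+2a|\ge 2a-|b|t\ge a$ for $t$ small. Hence, on $R$,
\[
|f_{a,b}|\approx t^{b},\qquad |f'_{a,b}|\gtrsim t^{b-2},\qquad (1-|z|)^\alpha\approx t^{2\alpha},
\]
and the $s$-slice of $R$ has length $\approx t^2$. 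Therefore
\[
\int_R|f|^{p-2}|f'|^2(1-|z|)^\alpha\,dA\gtrsim\int_0^\delta t^{bp-4+2\alpha+2}\,dt,
\]
which diverges when $bp+2\alpha-2\le-1$. For $p=2$ this is exactly $b\le\tfrac12-\alpha$.

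For general $p$ I would instead use Theorem \ref{theo:p/2}. When $a>0$, $f_{a,b}=S\,(1-z)^b$ with $S$ the atomic singular inner function and $(1-z)^b$ outer. Then $f_{a,b}\in A^p_\alpha$ if and only if $S(1-z)^{bp/2}=f_{a,bp/2}\in A^2_\alpha$. So the statement reduces to the $p=2$ case, with the exponent threshold read for $bp/2$.

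\textbf{Main obstacle.} The delicate step is the lower bound in (ii). The exponential is tiny when $s\gg t^2$, while $1-|S|$ and the factor $e^{-2as/t^2}$ are only bounded below once $s\gtrsim t^2$. One therefore has to choose the region precisely at the scale $s\approx t^2$, and check that the area element, the Poisson-type factor and the derivative factor $b(1-z)+2a$ (where $a>0$ and $b>-a$ enter) all behave uniformly there. As a cross-check I would redo the lower bound through Corollary \ref{coro:inner_outer}: on $R$ one has $1-|S|\ge1-e^{-a}$, and integrating against $(1-|z|^2)^{\alpha-2}$ yields the same divergent power of $t$.
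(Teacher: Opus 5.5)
Your proof is correct, but it is organized differently from the paper's. The paper applies the Cayley transform $w=\frac{1+z}{1-z}=u+iv$ globally and reduces both parts to the explicit integral $\int_0^\infty u^\alpha e^{-2au}\int_{\R}|1+w|^{-2(b+\alpha)}\,dv\,du$. Part (i) is the convergence of the inner Beta-type integral when $b+\alpha>\frac12$. Part (ii) is its divergence otherwise, combined with the global bound $|b(1-z)+2a|\ge 2\min\{a,a+b\}$ on all of $\D$; this is the only place where $b>-a$ enters.

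Your shell $R=\{t^2\le s\le 2t^2\}$ is, up to constants, the region between two horocycles at $1$. In the paper's picture it is a strip $\{u\simeq 1\}$ with $|v|\to\infty$, so both arguments detect the divergence in the same place. Working locally near $z=1$ nevertheless gives you more:
\begin{enumerate}
\item The local bound $|b(1-z)+2a|\ge a$ for small $t$ shows that $b>-a$ is superfluous once $a>0$.
\item You rightly observe that (ii) actually fails for $a=0$ (take $\alpha<\frac12$ and $0<b\le\frac12-\alpha$). The paper's constant $\min\{a,a+b\}$ also vanishes there.
\item Keeping the factor $t$ in $|b(1-z)+2a|\lesssim t+a$ makes your proof of (i) valid at $a=0$. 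The paper's bound by $(a+|b|)^2$ instead leaves, for $a=0$, the integral $\int_0^\infty u^\alpha(u+1)^{1-2(b+\alpha)}\,du$, which diverges unless $b>1-\frac{\alpha}{2}$.
\item Your reduction through Theorem \ref{theo:p/2} makes explicit that for general $p$ the threshold is $b\le\frac{1-2\alpha}{p}$ (Remark \ref{rem:fab}). So (ii) as printed should be read with $p=2$.
\end{enumerate}

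What the paper's route buys in return is explicit dependence on $a$ and $b$: the factor $(a+|b|)^2 4^b B\big(b+\alpha-\frac12,\frac12\big)$ times an explicit integral in $u$. Remark \ref{rem:power_est} uses exactly this to get the growth of $\|f_{ja,jb+\delta}\|_{\alpha,2}$ as $j\to\infty$, and hence to sum the binomial series in the proof of Theorem \ref{theo:NonLinear}. Your constants (the $\lesssim_a$, and a $\delta$ depending on $a,b$) would have to be tracked to serve that purpose.

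One small technical correction: in the coordinates $(t,s)$ the area element is $\simeq\frac{t}{\sqrt{t^2-s^2}}\,dt\,ds$ rather than $dt\,ds$. This is harmless, since the factor is $\simeq 1$ on $R$ and integrable in $s$ elsewhere. Alternatively, you can work in $(\theta,s)$ with $t\simeq|\theta|+s$.
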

    
        \begin{proof}
    
           We start with \ref{Propi)}. Note that
                \[
                    f_{a,b}'(z) = -\big(b(1-z) + 2a\big) (1-z)^{b-2}\exp\left(-a \frac{1+z}{1-z}\right).
                \]
                Therefore,
                \[
                    \|f_{a,b}\|_{\alpha,2}^2 \lesssim (a+|b|)^2 \int_{\mathbb{D}} |1-z|^{2(b-2)} \bigg|\exp\bigg(-a\frac{1+z}{1-z}\bigg)\bigg|^{2} (1-|z|^2)^{\alpha} \, dA(z).
                \]
                
            Mapping the disk $\D$ to the open right half-plane $\mathbb{H}$ with the change of variable
                \[
                    w = \frac{1+z}{1-z} \quad \mbox{ and } \quad \phi(w) = \frac{w - 1}{w +1} = z
                \]
                and writing $w = u + i v$, we obtain
                \[
                \begin{aligned}
                    \|f_{a,b}\|_{\alpha,2}^2 & \lesssim (a+|b|)^2 \int_\mathbb{H} |1-\phi(w)|^{2(b-2)} |e^{-aw}|^{2} (1-|\phi(w)|^2)^{\alpha} |\phi'(w)|^2\, du dv \\
                    & \simeq (a + |b|)^2 4^b \int_0^{+\infty} u^{\alpha} e^{-2au} \Bigg(\int_{-\infty}^{+\infty} \big(|1+w|^2\big)^{-b-\alpha} dv\Bigg) du.
                \end{aligned}
                \]
        
            Using a standard change of variables, we can check that
                \[
                    \int_{-\infty}^{+\infty} \big(|1+w|^2\big)^{-b-\alpha} dv  = \dfrac{2}{(u+1)^{2(b+\alpha)-1}} \int_0^{\frac{\pi}{2}} (\cos t)^{2(b+\alpha - 1/2) - 1} \, dt = \dfrac{B \left( b + \alpha - \frac{1}{2}, \frac{1}{2} \right)}{(u + 1)^{b + \alpha}},
                \]
                where $B$ is Euler's Beta function. Thus, if $b > \frac{1}{2} - \alpha$ this integral is finite and therefore
                \[
                    \|f_{a,b}\|_{\alpha,2}^2 \lesssim (a + |b|)^2 4^b B \left( b + \alpha - \frac{1}{2}, \frac{1}{2} \right) \int_0^{+\infty} \dfrac{u^\alpha}{(u + 1)^{2(b+\alpha) - 1}} e^{-2au} du < +\infty.
                \]
    
            As for \ref{Propii)}, if $b > -a$, the same argument shows that the inequality
            \[
                \|f_{a,b}\|_{\alpha,2}^2 \gtrsim (\min\{a,a+b\})^2 4^b B \left( b + \alpha - \frac{1}{2}, \frac{1}{2} \right) \int_0^{+\infty} \dfrac{u^\alpha}{(u + 1)^{2(b+\alpha) - 1}} e^{-2au} du
            \]
            holds.
        \end{proof}

        \begin{remark}
        \label{rem:fab}
            Thanks to Theorem \ref{theo:p/2}, for all $p>0$ and $0<\alpha<1$ we have that $f_{a, b}$ belongs to $A^p_\alpha$ if and only if $f_{a, pb/2}$ is in $A^2_\alpha$. Hence if $b>-2a/p$, one has
            \[
                f\in A^p_\alpha\iff b>\frac{1}{p}-\frac{2\alpha}{p}.
            \]
        \end{remark}
       
        \begin{corollary}
        \label{coro:notsub}
            If $0 < \alpha <\tfrac{1}{2}$ and $p > q>0$, then $A^p_\alpha$ is not contained in $A^q_\alpha$.
        \end{corollary}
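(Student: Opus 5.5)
The plan is to use the explicit family $f_{a,b}$ from Proposition \ref{Prop:BS} together with the membership criterion in Remark \ref{rem:fab}. Fix $a>0$ large. For $a>0$ the singular factor is nontrivial, so the condition $b>-2a/p$ in Remark \ref{rem:fab} is harmless once $a$ is large compared with the exponents involved. Remark \ref{rem:fab} then gives the threshold
\[
f_{a,b}\in A^r_\alpha \iff b>\frac{1-2\alpha}{r}, \qquad r>0,
\]
valid whenever $b>-2a/r$. Because $0<\alpha<\tfrac12$, the number $1-2\alpha$ is strictly positive. Consequently the threshold $(1-2\alpha)/r$ is strictly decreasing in $r$, and for $p>q$ we have $(1-2\alpha)/p<(1-2\alpha)/q$.

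Next choose $b$ with
\[
\frac{1-2\alpha}{p}<b\le \frac{1-2\alpha}{q}.
\]
This $b$ is positive, so $b>-2a/p$ and $b>-2a/q$ hold automatically for every $a>0$ (any fixed $a>0$, say $a=1$, will do). By Remark \ref{rem:fab} with $r=p$, $f_{a,b}\in A^p_\alpha$. By the same remark with $r=q$, $f_{a,b}\notin A^q_\alpha$, which gives the claimed non-inclusion.

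The only step needing care is to check that Remark \ref{rem:fab} really covers both directions at these exponents. Concretely, Theorem \ref{theo:p/2} applied with inner factor $\exp(-a\frac{1+z}{1-z})$ and outer factor $(1-z)^b$ reduces membership of $f_{a,b}$ in $A^r_\alpha$ to membership of $f_{a,rb/2}$ in $A^2_\alpha$. Proposition \ref{Prop:BS} then decides this second membership:
\begin{itemize}
\item part \ref{Propi)} gives membership when $rb/2>\tfrac12-\alpha$;
\item part \ref{Propii)}, whose hypothesis $rb/2>-a$ holds since $b>0$, gives non-membership when $rb/2\le\tfrac12-\alpha$.
\end{itemize}
Both inequalities are exactly the ones arranged by the choice of $b$. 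I do not expect a genuine obstacle. The only subtlety is the boundary case $b=(1-2\alpha)/q$, which falls under part \ref{Propii)} as stated, and in any case it can be avoided by choosing $b$ strictly inside the interval.
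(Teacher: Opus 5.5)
Your proof is correct and is essentially the paper's argument. The paper takes $a=1$ and the endpoint $b=\frac{1-2\alpha}{q}$, so that $\frac{pb}{2}>\frac12-\alpha=\frac{qb}{2}$, and Remark \ref{rem:fab} (that is, Theorem \ref{theo:p/2} combined with Proposition \ref{Prop:BS}) gives $f_{1,b}\in A^p_\alpha\setminus A^q_\alpha$; your choice of any $a>0$ and any $b\in(\frac{1-2\alpha}{p},\frac{1-2\alpha}{q}]$ is the same argument over a slightly larger parameter range.
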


        \begin{proof}
Fix $b:=\frac{1}{q}(1-2\alpha)$.  Since $1-2\alpha>0$, we have that
\[
bp>bq=1-2\alpha.
\]
Hence Remark \ref{rem:fab} yields that $f_{1, b}$ belongs to $A^p_\alpha$ but not to $A^q_\alpha$.

        \end{proof}

        \begin{remark} \label{rem:power_est}
            Using a change of variables and the monotone convergence theorem, we can show that
                \begin{align*}
                    \lim_{j \to \infty} j^{1 + \alpha} \int_0^{+\infty} \dfrac{u^\alpha}{(u + 1)^{2(jb + \delta + \alpha) - 1}} e^{-2jau} du & = \lim_{j \to \infty} \dfrac{1}{(2a)^{1 + \alpha}} \int_0^{+\infty} \dfrac{t^{\alpha}}{ \left( \frac{t}{2ja} + 1 \right)^{2(jb+\delta + \alpha)-1}} e^{-t} \, dt \\
                        & = \dfrac{1}{(2a)^{1+\alpha}}\int_0^{+\infty} t^\alpha e^{-t (1 + b/a)} \, dt = \dfrac{\Gamma (\alpha + 1)}{\big(2 (a+b) \big)^{1+\alpha}}
                \end{align*}
                if $a + b > 0$. Thus we have that
                \[
                    \|f_{ja, jb+\delta}\|_{\alpha,2} \simeq j^{\frac{1 - \alpha}{2}} 2^{jb + \delta} \sqrt{B \left( jb + \delta + \alpha - \dfrac{1}{2}, \dfrac{1}{2} \right)} \simeq j^{\frac{1}{4} - \frac{\alpha}{2}} 2^{jb}.
                \]
                Since $\left|\binom{\frac{p}{2}}{j}\right|\simeq \frac{1}{j^{1+\frac{p}{2}}}$ when $j \to \infty$, we will have that
                \[
                    \left|\binom{\frac{p}{2}}{j}\right| \dfrac{\|f_{ja, jb+\delta}\|_{\alpha,2}}{2^{jb}} \simeq j^{-\frac{3}{4} - \frac{\alpha + p}{2}}
                \]
                which is summable when $\alpha + p > \frac{1}{2}$.
        \end{remark}
    
        Our counterexample for the proof of Theorem \ref{theo:NonLinear} is also built upon Proposition \ref{Prop:BS}:

        \begin{proof}[Proof of Theorem \ref{theo:NonLinear}]
        	Consider the function $f_{1, b}$ as in Proposition \ref{Prop:BS} and 
                \[
                    g_c(z):=2^{b-c}(1-z)^c,
                \]
                where $0<\varepsilon<1-\alpha$ and
                \[
                    c=\frac{\varepsilon-\alpha}{p}.
                \]
                A standard computation using Theorem \ref{theo:p/2} and the Taylor series of $(1-z)^\frac{cp}{2}$ shows that $(1-z)^c$ belongs to $A^p_\alpha$, since $c>-\frac{\alpha}{p}$. On the other hand, Proposition \ref{Prop:BS} yields that $f_{1, b}$ is in $A^p_\alpha$ when 
                \begin{equation}
                \label{eqn:f_1b}
                    b>\frac{1}{p}(1-2\alpha).
                \end{equation}
            
            Let $h(z) := f_{1, b}(z) + g_c(z)$. If $b$ satisfies \eqref{eqn:f_1b}, then $b - c > \frac{1 - \alpha - \varepsilon}{p} > 0$ and hence 
                \[
                    \left| \dfrac{1-z}{2} \right|^{b-c} \left|\exp\left(-\frac{1+z}{1-z}\right)\right| < 1, \quad z \in \mathbb{D}.
                \]
                Thus, the function $h$ is nowhere vanishing in $\D$. Thanks to \eqref{eqn:nonvanishing}, $h$ lies into $A_\alpha^p$ if and only if $h^{p/2}$ belongs to $A^2_\alpha$. Moreover,
            	\[
            		\begin{aligned}
            		\big(h(z)\big)^{p/2} & = 2^{\frac{p(b-c)}{2}} (1-z)^{\frac{cp}{2}}\Bigg(1+ 2^{-(b-c)}(1-z)^{b-c}\exp\bigg(-\frac{1+z}{1-z}\bigg)\Bigg)^{p/2} \\
            		& = 2^{\frac{p(b-c)}{2}} (1-z)^{\frac{cp}{2}} \sum_{j = 0}^\infty \binom{p/2}{j} \frac{\big(f_{1,b-c}(z)\big)^j}{2^{j(b-c)}} \\
            		& = 2^{\frac{p(b-c)}{2}}  \sum_{j = 0}^\infty \binom{p/2}{j} \frac{f_{j,j(b-c) + \frac{cp}{2}}(z)}{2^{j(b-c)}}.
            		\end{aligned}
            	\]
            
            Since $ A^2_\alpha$ is a Hilbert space,
                \[
                    \left\|\big(h(z)\big)^{p/2}\right\|_{\alpha, 2}\le\sum_{j = 0}^\infty \left|\binom{p/2}{j}\right| \frac{\|f_{j,j(b-c) + \frac{cp}{2}}\|_{\alpha, 2}}{2^{j(b-c)}},
                \]
                thus Proposition \ref{Prop:BS} and Remark \ref{rem:power_est} yield that $h \in A^p_\alpha$ if 
                    \begin{equation}
                    \label{eqn:h}
                        b-c+\frac{cp}{2}>\frac{1}{2}-\alpha.
                    \end{equation}
            
            On the other hand, if 
                \begin{equation}
                \label{eqn:cnot}
                    b-c+\frac{cp}{2}=\frac{1}{2}-\alpha.
                \end{equation}
                then
                \[
                    \left\|\big(h(z)\big)^{p/2}\right\|_{\alpha, 2}\ge\left| \frac{p}{2^{b-c+1}}\|f_{1, b-c+\frac{cp}{2}}\|_{\alpha, 2}- \left \| \sum_{j \ne1} \binom{p/2}{j} \frac{f_{j,j(b-c) + \frac{cp}{2}}}{2^{j(b-c)}} \right \|_{\alpha, 2} \right|=\infty,
                \]
                thus $h$ is not in $A^p_\alpha$ in this case.
        
            The argument now distinguishes two cases:
                \begin{itemize}
                
                    \item Suppose that $p>2$. We will choose $b$ such that $f_{1, b}$ is in $A^p_\alpha$ but $h$ is not. To this end, fix $b$ as in \eqref{eqn:cnot}. We are left to check that $f_{1, b}$ is in $A^p_\alpha$: this is the case because inequality \eqref{eqn:f_1b} is equivalent to 
                     \[
                     \varepsilon\left(\frac{1}{p}-\frac{1}{2}\right)>(1-\alpha)\left(\frac{1}{p}-\frac{1}{2}\right),
                     \]
                    which holds since $\varepsilon<1-\alpha$ and $p>2$.
                     
                    \item Now assume that $p<2$, and fix 
                    \[
                    b=\frac{1}{p}(1-2\alpha),
                    \]
                    so that $f_{1, b}$ is not in $A^p_\alpha$. In order to check that $h$ is in $A^p_\alpha$ we are left to show that \eqref{eqn:h} holds. A computation shows that, with the parameters $b$ and $c$ that we fixed, \eqref{eqn:h} is equivalent to 
                    \[
                    \varepsilon\left(\frac{1}{2}-\frac{1}{p}\right)>\left(\frac{1}{2}-\frac{1}{p}\right)(1-\alpha)
                    \]
                    which holds since $\varepsilon<1-\alpha$ and $p<2$.
                \end{itemize}
        \end{proof}

        \begin{remark} \label{rem:RemaingCases}
    
            Observe the following:
            \begin{enumerate}
                
                \item If the series
                    \[
                        \sum_{j = 2}^\infty \binom{p/2}{j} \dfrac{f_{j,j(b-c)+\frac{cp}{2}} (z)}{2^{j(b-c)}}
                    \]
                    was convergent but not absolutely convergent in $A^2_\alpha$, our argument would show that $A^p_\alpha$ is not a linear space in the whole range $p, \alpha > 0$.
        
                \item If $p > 2$ and $0< \alpha < \frac{1}{2}$, then we can pick $c = 0$ in the argument above. This says that for all $0<\alpha<\tfrac{1}{2}$ there exists a function $f\in A^p_\alpha$ and a constant $C$ such that $f+C$ is not in $A^p_\alpha$.
            \end{enumerate}
        	
        \end{remark}

    \section{A class of Carleson measures arising from conformal invariance}
    \label{sec:cm}

        Corollary \ref{coro:inner_outer} yields a description of Carleson measures for $A^p_\alpha$, that is, of those positive Borel measures on the unit disc for which the embedding
            \[
                A^p_\alpha\subset L^p(\D, d\mu)
            \]
            is bounded. 

        \begin{proposition}
        \label{prop:CM_same}
            Let $0<\alpha<1$ and $p, q>0$. Then, a positive Borel measure $\mu$ is Carleson for $A^p_\alpha$ if and only if it is Carleson for $A^q_\alpha$. 
        \end{proposition}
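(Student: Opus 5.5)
Because $A^p_\alpha$ is not a vector space, "Carleson for $A^p_\alpha$" will mean that there is a constant $C$ with $\int_\D |f|^p\,d\mu \le C\|f\|_{\alpha,p}^p$ for all $f\in A^p_\alpha$. The plan is to transfer this inequality between exponents using the factorization behind Theorem \ref{theo:p/2}. By symmetry it suffices to go from $q$ to $p$, and passing through $2$ we may take $q=2$. So assume $\mu$ is Carleson for $A^2_\alpha$ and let $f=\Theta h\in A^p_\alpha$.

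\emph{Step 1.} Set $F:=\Theta h^{p/2}$. Then $|f|^p = |\Theta|^p|h|^p$ and $|F|^2=|\Theta|^2|h|^p$. These do not match unless $\Theta$ is constant, and this is the main obstacle. The fix is to use the outer part $G:=h^{p/2}$ together with the inner part. Since $|\Theta|\le 1$,
\[
\int_\D |f|^p\,d\mu \le \int_\D |h|^p\,d\mu=\int_\D |G|^2\,d\mu \lesssim \|G\|_{\alpha,2}^2 .
\]
This crude bound already suffices provided $\|G\|_{\alpha,2}^2\lesssim \|f\|_{\alpha,p}^p$ with constants independent of $f$.

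\emph{Step 2.} To get that estimate, work with the quantity $\rho_{\alpha,p}$ of Theorem \ref{theo:equiv_A}, which I will treat as quantitatively equivalent to $\|\cdot\|_{\alpha,p}^p$ up to the boundary term $|f(0)|^p$. The proof of Theorem \ref{theo:equiv_A} gives this: the integrals are comparable with constants depending only on $\alpha,p$. Since $|G|^2=|h|^p$ pointwise in $\D$ and a.e. on $\T$,
\[
\rho_{\alpha,2}(G)=\rho_{\alpha,p}(h)\le \rho_{\alpha,p}(f).
\]
The last inequality is the identity in the proof of Corollary \ref{coro:inner_outer}:
\[
P_z(|f|^p)-|f(z)|^p = P_z(|h|^p)-|h(z)|^p + (1-|\Theta(z)|^p)|h(z)|^p ,
\]
where the extra term is nonnegative. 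For the constant term, $|G(0)|^2=|h(0)|^p$. This is not controlled by $|f(0)|^p$, since $\Theta(0)$ may vanish. It is, however, controlled by $\|h\|_{H^p}^p=\|f\|_{H^p}^p\le \|f\|_{\alpha,p}^p$, using the contractive inclusion $A^p_\alpha\subset H^p$ from \eqref{eqn:Apalphanorm}, since $M_p^p(r,f)$ is nondecreasing and $(1-r^2)^{\alpha-1}\ge 1$. Putting these together gives $\|G\|_{\alpha,2}^2\lesssim \|f\|_{\alpha,p}^p$, and hence $\mu$ is Carleson for $A^p_\alpha$.

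\emph{Step 3.} The converse direction, from $A^p_\alpha$ to $A^2_\alpha$, is the same argument with $p$ and $2$ exchanged. For $F=\Theta g\in A^2_\alpha$, put $g^{2/p}$ in $A^p_\alpha$. Then $\int|F|^2d\mu\le\int |g^{2/p}|^p\,d\mu\lesssim \|g^{2/p}\|_{\alpha,p}^p\lesssim \|F\|_{\alpha,2}^2$, using the same $\rho$-comparison. The point needing most care is that every equivalence constant, namely those from Theorem \ref{theo:equiv_A}, from the Hardy–Stein equivalence \eqref{eqn:equivnorm}, and from the $H^p$ bound, depends only on $p$ and $\alpha$. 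This uniformity is what makes the Carleson constants transfer.
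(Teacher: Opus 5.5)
Your proof is correct and follows essentially the paper's argument. You bound $|f|^p\le |h|^p=|h^{p/q}|^q$, apply the Carleson property to the outer function $h^{p/q}$, and control $\|h^{p/q}\|_{\alpha,q}^q$ by $\|f\|_{\alpha,p}^p$ through Theorem \ref{theo:equiv_A}. Your direct comparison $\rho_{\alpha,2}(h^{p/2})=\rho_{\alpha,p}(h)\le\rho_{\alpha,p}(f)$, together with the bound $|h(0)|^p\le\|f\|_{H^p}^p\le\|f\|_{\alpha,p}^p$ for the constant term, is a more careful version of the paper's chain, which passes through $\Theta h^{p/q}$ and does not address the value at $0$.
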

    
        \begin{proof}
            Thanks to Theorem \ref{theo:p/2} a function $f = \Theta h$, where $\Theta$ is inner and $h$ is outer, belongs to $A^p_\alpha$ if and only if $\Theta h^\frac{p}{q}\in A^q_\alpha$. Hence, if $\mu$ is Carleson for $A^q_\alpha$, then for all $f=\Theta h$ in $A^p_\alpha$ we have that
                \[
                    \|f\|_{L^p(d\mu)}^p \le \|h\|_{L^p(d\mu)}^p = \|h^\frac{p}{q}\|_{L^2(d\mu)}^q\lesssim\|h^\frac{p}{q}\|_{\alpha, q}^q\lesssim\|\Theta h^\frac{p}{q}\|_{\alpha, q}^q=\|f\|^p_{\alpha, p},
                \]
                where the last inequality uses the alternative description of $A^q_\alpha$ provided by Theorem \ref{theo:equiv_A}. Thus $\mu$ is Carleson for $A^p_\alpha$.
                
            Changing the role of $p$ and $q$, one obtains the converse implication.
        \end{proof}

        In particular, in order to get a description of Carleson measures for the spaces $A^p_\alpha$ it suffices to describe Carleson measures for Dirichlet-type spaces. Such measures are well understood: Stegenga \cite{Stegenga80} and Wu \cite{Wu99} describe them in terms of capacity conditions for union of boxes in the unit disc. An alternative characterization was provided by Arcozzi, Rochberg and Sawyer in \cite[Thm. 1]{ARS02}. The advantage of the latter characterization is that no capacity is involved. Given a point $w$ in $\D$, its associated Carleson box is defined as
            \[
                S_w:=\left\{z\in\D\,\bigg|\, 1-|z|\leq 1-|w|,\,\, \frac{|\arg(z\overline w)|}{2\pi}<1-|w|\right\},
            \]
            while its stretched Carleson box is defined as
                \[
                    \tilde S_w:=\left\{z\in\D\,\bigg|\, 1-|z|\leq 2(1-|w|),\,\, \frac{|\arg(z\overline w)|}{2\pi}<1-|w|\right\}.
                \]

        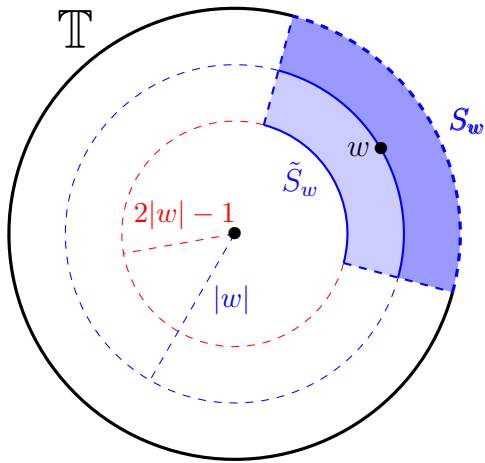
\begin{figure}[h!]
            \begin{tikzpicture}[x=3cm,y=3cm]
                \def \mod {0.75};
                \def \arg {30};
                \pgfmathsetmacro{\stretch}{2*\mod-1}
                \pgfmathsetmacro{\angle}{180*(1-\mod)}
                
                \draw[very thick, black] ({\arg + \angle}:1) arc({\arg + \angle}:{\arg - \angle + 360}:1);
                \draw[very thick, dashed, blue] ({\arg - \angle)}:1) arc({\arg - \angle}:{\arg + \angle}:1);
                \draw[dashed, blue] ({\arg + \angle}:\mod) arc({\arg + \angle}:{\arg - \angle + 360}:\mod);
                \draw[thick, blue] ({\arg - \angle)}:\mod) arc({\arg - \angle}:{\arg + \angle}:\mod);
                \draw[dashed, red] ({\arg + \angle}:{\stretch}) arc({\arg + \angle}:{\arg - \angle +360}:{\stretch});
                \draw[thick, blue] ({\arg - \angle}:{\stretch}) arc({\arg - \angle}:{\arg + \angle}:{\stretch});
                \draw[dashed, red] (0,0) -- ($ \stretch*({cos(\arg + 160)},{sin(\arg + 160))}) $) node[pos=0.45, above=1pt]{\small $2|w|-1$};
                \draw[dashed, blue] (0,0) -- ($ \mod*({cos(\arg + 210)},{sin(\arg + 210))}) $) node[pos=0.45, right=2pt]{\small $|w|$};
                \draw (0,0) node{$\bullet$};
                \draw ($ ({cos(135)},{sin(135)}) $) node[above=6pt]{\LARGE $\mathbb{T}$};
                \draw[thick, dashed, blue] ($ \stretch*({cos(\arg + \angle)},{sin(\arg + \angle)}) $) -- ($ ({cos(\arg + \angle)},{sin(\arg + \angle)}) $);
                \draw[thick, dashed, blue] ($ \stretch*({cos(\arg - \angle)},{sin(\arg - \angle)}) $) -- ($ ({cos(\arg - \angle)},{sin(\arg - \angle)}) $);
                \fill[blue, opacity=0.2] ({\arg - \angle}:\mod) arc({\arg - \angle}:{\arg + \angle}:\mod)--({\arg + \angle}:{\stretch}) arc({\arg + \angle}:{\arg - \angle}:{\stretch})--cycle;
                \fill[blue, opacity=0.4] ({\arg - \angle}:\mod) arc({\arg - \angle}:{\arg + \angle}:\mod)--({\arg + \angle}:{1}) arc({\arg + \angle}:{\arg - \angle}:{1})--cycle;
                \draw[blue] ($ ({cos(\arg)},{sin(\arg)}) $) node[right=3pt]{$S_w$};
                \draw[black] ($ \mod*({cos(\arg)},{sin(\arg)}) $) node{$\bullet$} node[left]{$w$};
                \draw[blue] ($ ({cos(\arg)},{sin(\arg)}) $) node[right=3pt]{\small $S_w$};
                \draw[black] ($ \mod*0.65*({cos(\arg)},{sin(\arg)}) $) node[left, blue]{$\tilde{S}_w$};
            \end{tikzpicture}
            \caption{Visualization of the Carleson boxes $S_w$ and $\tilde{S}_w$.}
        \end{figure}

        \begin{theorem}[Arcozzi-Rochberg-Sawyer \cite{ARS02}]
        \label{theo:ARS}
            Let $\mu$ a positive Borel measure on $\D$. Then $\mu$ is a Carleson measure for $ A^2_\alpha$ if and only if there exists a constant $C>0$ such that
                \begin{equation}
                \label{eqn:ARS}
                    \int_{\tilde S_w}\mu(S_z\cap S_w)^2\frac{dA(z)}{(1-|z|)^{2+\alpha}}\leq C\mu(S_w)
                \end{equation}
                for all $w\in\D$. If so, the norm of the embedding $ A^2_\alpha$ depends only on $C$.
        \end{theorem}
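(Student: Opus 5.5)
The plan is to follow the strategy of Arcozzi, Rochberg and Sawyer: discretize $A^2_\alpha$ on the dyadic (Whitney) tree of the disc, reduce the Carleson embedding to a weighted Hardy inequality on that tree, and characterize the latter by testing on Carleson boxes.

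First, since $0<\alpha<1$, $A^2_\alpha$ is a reproducing kernel Hilbert space with kernel
\[
K_\alpha(z,w)\simeq (1-\overline w z)^{-(1-\alpha)}.
\]
Its real part is positive and comparable to $|1-\overline w z|^{-(1-\alpha)}$. By duality, $\mu$ is Carleson if and only if the adjoint of the embedding is bounded from $L^2(\mu)$ into $A^2_\alpha$. That is,
\[
\int\!\!\int K_\alpha(z,w)\,g(z)\overline{g(w)}\,d\mu(z)\,d\mu(w)\lesssim \int|g|^2\,d\mu .
\]
By positivity of $\operatorname{Re}K_\alpha$, it suffices to check this for $g\ge0$ with $\operatorname{Re}K_\alpha$ in place of $K_\alpha$.

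Next I would discretize. Let $T$ be the dyadic tree of top halves of the boxes $S_v$. Then
\[
|1-\overline w z|^{-(1-\alpha)}\simeq \sum_{v\ge z\wedge w}(1-|v|)^{-(1-\alpha)}\quad\text{(up to constants)},
\]
where the sum runs over the geodesic from the root to the confluent $z\wedge w$ of $z$ and $w$. This is the standard geometric-series estimate along the geodesic, and it uses $\alpha<1$ so that the sum is dominated by its last term. The quadratic form then becomes
\[
\sum_{v\in T}(1-|v|)^{-(1-\alpha)}\Big(\int_{S_v}g\,d\mu\Big)^2 .
\]
So the Carleson property is equivalent to boundedness of the dual Hardy operator
\[
I^*:L^2(\mu)\to\ell^2\big(T,(1-|v|)^{-(1-\alpha)}\big),\qquad I^*(g\mu)(v)=\int_{S_v}g\,d\mu .
\]

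For necessity, I would test this inequality on $g=\mathbf 1_{S_w}$. This gives $\sum_{v\le w}(1-|v|)^{-(1-\alpha)}\mu(S_v\cap S_w)^2\lesssim\mu(S_w)$; the vertices strictly above $w$ are handled by the same argument, with the scale of $w$ absorbing them. Replacing the sum over Whitney boxes by area integration, with $dA(z)/(1-|z|)^2$ per box, yields exactly the integral condition over $\tilde S_w$ in the statement. The stretched box $\tilde S_w$ appears because the Whitney pieces touching $S_w$ lie inside it.

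Sufficiency is the main obstacle: a testing condition on indicators must be shown to control all $g\ge0$. I would try the ARS stopping-time argument. Expand the square as $\sum_v(1-|v|)^{-(1-\alpha)}I^*(g\mu)(v)^2$ and select maximal vertices where the averages $\mu(S_v)^{-1}\int_{S_v}g\,d\mu$ exceed $2^k$. On each stopping region, control the contribution by the testing condition at the stopping vertex times $4^k$. Then sum over $k$ using the $L^2(\mu)$-boundedness of the dyadic maximal function with respect to $\mu$; the sparse (disjointness) structure of the stopping families makes this summable. An alternative route is Kerman--Sawyer-type duality combined with a good-$\lambda$ inequality. In either approach the constant depends only on $C$, which gives the final assertion of the theorem.
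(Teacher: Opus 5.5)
The paper quotes this theorem from Arcozzi--Rochberg--Sawyer without proof, so your outline has to stand on its own. It contains one normalization error and one genuine gap.

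\textbf{The normalization error.} Here $\|z^n\|_{\alpha,2}^2=\Gamma(\alpha)\,n!/\Gamma(n+\alpha)$, so the reproducing kernel of $A^2_\alpha$ is exactly $(1-\overline{w}z)^{-\alpha}$, not $(1-\overline{w}z)^{-(1-\alpha)}$. With your tree weight $(1-|v|)^{-(1-\alpha)}$, turning sums over Whitney boxes into integrals against $dA(z)/(1-|z|)^2$ produces $dA(z)/(1-|z|)^{3-\alpha}$. That is not the $dA(z)/(1-|z|)^{2+\alpha}$ of \eqref{eqn:ARS}, so as written you would be characterizing Carleson measures for $A^2_{1-\alpha}$. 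With the weight $(1-|v|)^{-\alpha}$ the exponents match. The geometric series along the geodesic is then dominated by its last term because $\alpha>0$. The hypothesis $\alpha<1$ is what gives $\operatorname{Re}(1-\overline{w}z)^{-\alpha}\ge\cos(\alpha\pi/2)\,|1-\overline{w}z|^{-\alpha}$.

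\textbf{The gap.} It lies in the two-sided comparison $|1-\overline{w}z|^{-\alpha}\simeq\sum_{v\ge z\wedge w}(1-|v|)^{-\alpha}\simeq(1-|z\wedge w|)^{-\alpha}$. Only $\gtrsim$ holds: both points lie in $S_{z\wedge w}$, so $|1-\overline{w}z|\lesssim 1-|z\wedge w|$. The reverse fails. Take $z=(1-\varepsilon)e^{i\varepsilon}$ and $w=(1-\varepsilon)e^{-i\varepsilon}$, on opposite sides of an endpoint of a first-generation dyadic arc placed at $1$. Then $|1-\overline{w}z|\simeq\varepsilon$, but their confluent is the root.

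So your reduction ``Carleson if and only if $I^*$ is bounded'' is justified only for necessity, where the lower bound and testing on $\mathbf 1_{S_w}$ do what you say. For sufficiency the tree form does not dominate the disc form. Your stopping-time argument is correct, and is actually the routine part, since it reduces to Doob's inequality for the dyadic maximal function of $\mu$. But it only proves the tree inequality and never returns to the disc.

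What is missing is a primal pointwise estimate. Let $Q_v$ be the Whitney box of $v$, $\tilde Q_v$ its union with the neighbouring boxes, and $v(z)$ the vertex with $z\in Q_{v(z)}$. Set $\phi(v)=(\int_{\tilde Q_v}|f'|^2\,dA)^{1/2}$. Then:
\begin{itemize}
\item the sub-mean-value property of $|f'|^2$ gives $|f(z)-f(0)|\lesssim\sum_{v\ge v(z)}\phi(v)$;
\item bounded overlap gives $\sum_v\phi(v)^2(1-|v|)^\alpha\lesssim\|f\|_{\alpha,2}^2$;
\item the inequality $\sum_u\mu(Q_u)(\sum_{v\ge u}\phi(v))^2\lesssim\sum_v\phi(v)^2(1-|v|)^{\alpha}$, which is the adjoint of your $I^*$ bound, then yields $\int|f-f(0)|^2\,d\mu\lesssim\|f\|_{\alpha,2}^2$.
\end{itemize}
Alternatively, dominate $|1-\overline{w}z|^{-\alpha}$ by the sum of the tree kernels of finitely many rotated dyadic grids, and run your argument in each grid.

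Either way, you also need to derive the dyadic testing condition from \eqref{eqn:ARS}. This is where the stretched box enters: the part of $\tilde S_w$ above $w$ directly bounds the top term $(1-|w|)^{-\alpha}\mu(S_w)^2$ by $C\mu(S_w)$, i.e.\ it gives $\mu(S_w)\lesssim(1-|w|)^{\alpha}$.
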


        The goal of this section is to relate Carleson measures for $A^p_\alpha$ with their index of conformal invariance:
        
        \begin{theorem}[\cite{brevig2025contractivehardylittlewoodinequalitiesdirichlet}]
        \label{theo:confinv}
            The index of conformal invariance of $A^p_\alpha$ is $\frac{\alpha}{p}$ for all $\alpha, p>0$.
        \end{theorem}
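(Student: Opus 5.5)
The plan is to prove the two halves of the statement separately. First, the weighted composition operators $T_a=T_a^{\alpha/p}$ from \eqref{eqn:WComposition} are bounded on $A^p_\alpha$ uniformly in $a\in\D$, for $0<\alpha<1$. Second, no other exponent $s\neq \alpha/p$ gives uniformly bounded operators $T^s_a$. The range $\alpha\ge 1$ is classical (Hardy and weighted Bergman spaces), so only $0<\alpha<1$ needs work.

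For uniform boundedness I will use the equivalent norm \eqref{eqn:equivnorm}, and I will not need linearity of $A^p_\alpha$. Set $s=\alpha/p$ and $g=T_af=(\varphi_a')^{s}(f\circ\varphi_a)$. Then
\[
g'=(\varphi_a')^{s+1}\,(f'\circ\varphi_a)+s\,\frac{\varphi_a''}{\varphi_a'}\,(\varphi_a')^{s}\,(f\circ\varphi_a).
\]
Since $|x+y|^2\le 2|x|^2+2|y|^2$, the quantity $\int|g|^{p-2}|g'|^2(1-|z|^2)^\alpha\,dA$ splits into two integrals.

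In the first integral the powers of $|\varphi_a'|$ combine to $|\varphi_a'|^{s(p-2)+2s+2}=|\varphi_a'|^{\alpha+2}$. The identity $(1-|z|^2)|\varphi_a'(z)|=1-|\varphi_a(z)|^2$ and the change of variables $w=\varphi_a(z)$ then turn this term exactly into $\int|f|^{p-2}|f'|^2(1-|w|^2)^\alpha\,dA(w)$.

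In the second integral, the identities $\varphi_a''/\varphi_a'=2\overline a/(1-\overline a z)$ and $1-\overline a\varphi_a(w)=(1-|a|^2)/(1-\overline a w)$, followed by the same change of variables, give $s^2\int_\D|f(w)|^p\,d\mu_a(w)$. Here $\mu_a$ is an explicit absolutely continuous measure, of the form $|a|^2$ times a power of $(1-|w|^2)$ times a Möbius factor in $|1-\overline a w|$. Hence uniform boundedness of $T_a$ reduces to showing that the measures $\mu_a$ are Carleson for $A^p_\alpha$ with constants independent of $a$.

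By Proposition \ref{prop:CM_same}, and by following the constants through its proof (which only uses Theorem \ref{theo:p/2} and Theorem \ref{theo:equiv_A}), it is enough to treat $A^2_\alpha$, uniformly in $a$. For this I will use Theorem \ref{theo:ARS}. I expect this to be the main obstacle: one has to estimate $\mu_a(S_w)$ and the integral in \eqref{eqn:ARS} uniformly in both $a$ and $w$.

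The first thing I would try is to bound $\mu_a$ by a multiple of $(1-|w|^2)^{\alpha}|\varphi_a'(w)|^{\beta}\,dA$ for a suitable $\beta$. I would then compute $\mu_a(S_w)\lesssim (1-|w|)^{2+\alpha}$ uniformly, splitting according to whether $S_w$ is close to $a/|a|$ at scale $1-|a|$ or not. Given this box estimate, the left side of \eqref{eqn:ARS} is at most a constant times
\[
\int_{\tilde S_w}(1-|z|)^{2+\alpha}\,\mu_a(S_z\cap S_w)\,dA(z)\lesssim \mu_a(S_w),
\]
by Fubini, which verifies \eqref{eqn:ARS}. A cheaper alternative I would also check: if $\mu_a\lesssim(1-|w|^2)^{\alpha}\,dA$ uniformly in $a$, then it is Carleson because $A^2_\alpha\subset H^2$ embeds in $L^2((1-|w|^2)^\alpha dA)$. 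The reverse inequality $\|f\|\lesssim\|T_af\|$ comes for free, since $\varphi_a\circ\varphi_a=\mathrm{id}$ makes $T_a$ an involution up to a unimodular constant.

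For the converse, apply $T^s_a$ to the constant function $1$, which lies in $A^p_\alpha$. We have $T^s_a1=(\varphi_a')^{s}=(\varphi_a')^{\alpha/p}(\varphi_a')^{s-\alpha/p}$, and the first factor has norm independent of $a$. I would then estimate $\|(\varphi_a')^{s}\|_{\alpha,p}$ directly from the explicit formula, or via \eqref{eqn:nonvanishing} and the Taylor coefficients of $(1-\overline a z)^{-sp}$ in $A^2_\alpha$. This should show that the norm blows up as $|a|\to1$ like a power $(1-|a|)^{-|s-\alpha/p|\cdot\text{const}}$ whenever $s\neq\alpha/p$. Together with the first part, this identifies $\alpha/p$ as the index of conformal invariance.
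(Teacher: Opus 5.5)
Your reduction is the same as the paper's. You split $(T_af)'$, use exact invariance for the first term, and identify the second term with $\int|f|^p\,d\mu_a$, where $d\mu_a=|a|^2(1-|w|^2)^\alpha|1-\overline aw|^{-2}\,dA$, so $\mu_a\le\mu^{\alpha,2}_a$. You then pass to $A^2_\alpha$ via Proposition~\ref{prop:CM_same} and Theorem~\ref{theo:ARS}. The gap is in the only nontrivial step, the verification of \eqref{eqn:ARS}:
\begin{itemize}
\item The box estimate you aim for, $\mu_a(S_w)\lesssim(1-|w|)^{2+\alpha}$ uniformly in $a$, is false. For $z\in S_a$ one has $|1-\overline az|\simeq1-|a|$, hence $\mu_a(S_a)\simeq|a|^2(1-|a|)^{\alpha}$.
\item Your ``cheaper alternative'' fails for the same reason. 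The density of $\mu_a$ with respect to $(1-|w|^2)^\alpha\,dA$ is $|a|^2|1-\overline aw|^{-2}$, which is of size $(1-|a|)^{-2}$ near $a$. No fixed power $|\varphi_a'|^\beta$ dominates it uniformly either.
\item The best uniform box bound is $\mu_a(S_w)\lesssim(1-|w|)^{\alpha}$. That is only the one-box condition, which for $0<\alpha<1$ is necessary but not sufficient for $A^2_\alpha$-Carleson measures. Inserted into your Fubini step, it yields only the bound $\int_{S_w}\log\frac{2(1-|w|)}{1-|\zeta|}\,d\mu_a(\zeta)$, and box information alone does not control this by $\mu_a(S_w)$.
\end{itemize}
In fact $\mu_a$ is extremal: with the normalized kernel $k_a$ one gets $\int|k_a|^2\,d\mu_a\simeq|a|^2$, while $\|k_a\|_{\alpha,2}=1$. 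So the dependence on $a$ cannot be thrown away.

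What is needed is the two-sided, $a$-dependent estimate of Lemma~\ref{lemma:mu_boxes}: $\mu^{\gamma,s}_a(\tilde S_w)\simeq\mu^{\gamma,s}_a(S_w)\simeq(1-|w|)^{\gamma+2}|1-\overline aw|^{-s}$ whenever $\gamma+2-s>0$. Its delicate case, $a\in S_w$, is a sum over dyadic enlargements of $S_a$, which is comparable to its top term precisely because $\gamma+2-s>0$. Apply it first to $\mu_a(S_z\cap S_w)\le\mu^{\alpha,2}_a(S_z)$; this bounds the left side of \eqref{eqn:ARS} by a constant times $\mu^{\alpha+2,4}_a(\tilde S_w)$. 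A second application, again with $\gamma+2-s=\alpha>0$, gives $\mu^{\alpha+2,4}_a(\tilde S_w)\simeq\mu^{\alpha,2}_a(S_w)\bigl(\frac{1-|w|}{|1-\overline aw|}\bigr)^2\lesssim\mu^{\alpha,2}_a(S_w)$. The gain is exactly the factor $|1-\overline aw|^{-2}$ that your uniform bounds discard.

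Your converse also needs repair. $\|T^s_a1\|^p_{\alpha,p}=\|(\varphi_a')^s\|^p_{\alpha,p}$ tends to $0$, not $\infty$, when $0<s<\alpha/p$; it is of order $(1-|a|)^{\alpha-sp}$ when $sp>\alpha/2$. For that range you must use $T^s_a\circ T^s_a=c\,\mathrm{Id}$ with $|c|=1$ and test on $f=T^s_a1$. The paper itself only proves uniform boundedness of $T^{\alpha/p}_a$.
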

        
        We provide an alternative proof of Theorem \ref{theo:confinv} by using Carleson measures.
        The bridge between Carleson measures and conformal invariance is provided by the following proposition. For all positive real numbers $\gamma$ and $s$, consider the measure 
            \begin{equation}\label{eqn:mu}
                d\mu^{\gamma, s}_a (z) := \frac{(1-|z|)^\gamma}{|1-\overline az|^s}~dA(z),\qquad a\in\D.
            \end{equation}
        
        \begin{proposition}
        \label{prop:conf_index_cm}
            Let $\alpha, p>0$. The weighted composition operators defined by 
            \[
                T_af(z):=\left(\varphi_a'(z)\right)^\frac{\alpha}{p}f(\varphi_a(z)), \quad z\in\D,
            \]
            satisfy
                \[
                    \sup_{a\in\D}\|T_a\|_{\mathcal{B}(A^p_\alpha)}<\infty
                \]
                if and only if the measures $\{ \mu^{\alpha, 2}_a \}_{a\in\D}$ (following \eqref{eqn:mu}) are Carleson for $A^p_{\alpha}$, uniformly in $a$.
        \end{proposition}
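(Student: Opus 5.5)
The plan is to compute $\|T_af\|_{\alpha,p}^p$ through the equivalent description \eqref{eqn:equivnorm}, together with the point evaluation at the origin:
\[
\|g\|_{\alpha,p}^p\simeq |g(0)|^p+\int_\D |g|^{p-2}|g'|^2(1-|z|^2)^\alpha\,dA .
\]
The integrand is then split into a part that reproduces $\|f\|_{\alpha,p}^p$ by conformal invariance and a part that is exactly $\int|f|^p\,d\mu_a^{\alpha,2}$.

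Write $g=T_af=u\,F$, where $u=(\varphi_a')^{\alpha/p}$ and $F=f\circ\varphi_a$. Away from the (discrete) zero set of $F$, we choose local branches and observe that
\[
|g|^{p-2}|g'|^2=\tfrac{4}{p^2}\bigl|(g^{p/2})'\bigr|^2,
\qquad
(g^{p/2})'=A+B,
\]
with
\[
A=(u^{p/2})'F^{p/2},\qquad B=u^{p/2}(F^{p/2})'.
\]
The moduli $|A|$ and $|B|$ are independent of the branch, so we may use
\[
|A+B|^2\le 2|A|^2+2|B|^2
\quad\text{and}\quad
|A|^2\le 2|A+B|^2+2|B|^2 .
\]

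For the $B$ term, $|B|^2\simeq |\varphi_a'|^{\alpha}|F|^{p-2}|F'|^2$. We use the chain rule, the identity $1-|\varphi_a(z)|^2=|\varphi_a'(z)|(1-|z|^2)$, and the change of variables $w=\varphi_a(z)$ (an involution). This gives exactly
\[
\int |B|^2(1-|z|^2)^\alpha\,dA\simeq\int_\D|f|^{p-2}|f'|^2(1-|w|^2)^\alpha\,dA(w)\lesssim\|f\|_{\alpha,p}^p .
\]

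For the $A$ term, $u^{p/2}=(\varphi_a')^{\alpha/2}$ and $\varphi_a''/\varphi_a'=2\overline a/(1-\overline az)$. Hence
\[
|A|^2\simeq |a|^2\,|\varphi_a'|^\alpha\,\frac{|F|^p}{|1-\overline az|^2}.
\]
Substituting $z=\varphi_a(w)$ and using
\[
1-\overline a\varphi_a(w)=\frac{1-|a|^2}{1-\overline aw},
\qquad
dA(z)=\frac{(1-|a|^2)^2}{|1-\overline aw|^4}\,dA(w),
\]
the powers of $1-|a|^2$ cancel, and one obtains
\[
\int|A|^2(1-|z|^2)^\alpha\,dA\simeq |a|^2\int_\D |f(w)|^p\,d\mu^{\alpha,2}_a(w),
\]
up to harmless comparison of $(1-|w|^2)^\alpha$ with $(1-|w|)^\alpha$.

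For the point term, $|T_af(0)|^p=(1-|a|^2)^\alpha|f(a)|^p$. Since $d\mu_a^{\alpha,2}\gtrsim (1-|a|)^{\alpha-2}\,dA$ on a hyperbolic disc around $a$, subharmonicity of $|f|^p$ bounds this term by $\int|f|^p\,d\mu^{\alpha,2}_a$. Combining the pieces yields, uniformly in $a$,
\[
\|T_af\|^p_{\alpha,p}\lesssim \int|f|^p\,d\mu_a^{\alpha,2}+\|f\|_{\alpha,p}^p,
\]
\[
|a|^2\int|f|^p\,d\mu_a^{\alpha,2}\lesssim \|T_af\|_{\alpha,p}^p+\|f\|_{\alpha,p}^p .
\]
The first inequality gives the direction "uniformly Carleson $\Rightarrow$ $\sup_a\|T_a\|<\infty$". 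The second gives the converse for $|a|\ge 1/2$.

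The step I expect to need care is the degenerate region $|a|<1/2$, where the factor $|a|^2$ kills the lower bound. There I argue directly. For $|a|\le 1/2$, $\mu_a^{\alpha,2}$ is comparable to $(1-|z|)^\alpha dA$, a finite measure. Hence
\[
\int|f|^p\,d\mu_a^{\alpha,2}\lesssim \sup_r M_p^p(r,f)\le\|f\|^p_{\alpha,p},
\]
using $A^p_\alpha\subset H^p$ contractively, which follows from \eqref{eqn:ContractiveEmbbeding} and monotonicity in $\alpha$ of \eqref{eqn:Apalphanorm}. So these measures are uniformly Carleson regardless. By the upper estimate, this also gives $\sup_{|a|\le1/2}\|T_a\|<\infty$ unconditionally, so the equivalence holds on this range trivially.

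A minor technical point is justifying the local-branch computation across zeros of $F$. These form a null set, and the quantities $|A|$ and $|B|$ are globally well defined off it. A second point is that the estimates are applied to $f\in A^p_\alpha$, where all integrals are finite, so the reverse-triangle manipulation is legitimate.
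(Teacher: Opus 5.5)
Your proposal is correct and follows essentially the paper's route. Your $A$ and $B$ are exactly the paper's two pieces of $(T_af)'$ multiplied by $\frac{p}{2}(T_af)^{p/2-1}$, so $\int_\D|B|^2(1-|z|^2)^\alpha\,dA$ is the paper's $I_1$ (the Dirichlet-type integral of $f$, by conformal invariance) and $\int_\D|A|^2(1-|z|^2)^\alpha\,dA$ is its $I_2\simeq|a|^2\int_\D|f|^p\,d\mu^{\alpha,2}_a$.

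Your handling of the term $|T_af(0)|^p$ and of the range $|a|\le\frac12$ fills in details the paper skips. One small fix is needed there: $A^p_\alpha\subset H^p$ fails for $\alpha>1$, so instead use $\|f\|^p_{\alpha,p}\ge\|f\|^p_{\alpha+2,p}=(\alpha+1)\int_\D|f|^p(1-|z|^2)^\alpha\,dA$. This follows from the same monotonicity in $\alpha$ of \eqref{eqn:Apalphanorm} and works for every $\alpha>0$.
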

    
        \begin{proof}
        
            Recall that
                \[
                    \|T_af\|_{\alpha,p}^p \simeq \int_{\D} \big|T_af(z)\big|^{p-2} \big|(T_af)'(z)\big|^2 (1-|z|^2)^{\alpha} ~dA(z).
                \]
                Since
                \[
                    (T_a f)' = \dfrac{\alpha}{p} \big( \varphi_a' \big)^{\frac{\alpha}{p} - 1} \varphi_a'' f \big( \varphi_a ) + \big( \varphi_a' \big)^{\frac{\alpha}{p} + 1} f' (\varphi_a)
                \] 
                we have that $$|I_1-I_2|\lesssim\|T_af\|_{\alpha,p}^p \lesssim I_1 + I_2,$$ where:
                \[
                    \begin{aligned}
                        I_1 & \coloneqq \int_{\D} \big|f(\varphi_a(z))\big|^{p-2}\big|f'(\varphi_a(z))\big|^{2}\big|\varphi_a'(z)\big|^{\alpha+2} (1-|z|^2)^{\alpha}~dA(z),\\
                        I_2 & \coloneqq \int_{\D} \big|f(\varphi_a(z))\big|^{p}\big|\varphi_a'(z)\big|^{\alpha - 2}\big| \varphi_a''(z)\big|^{2} (1-|z|^2)^{\alpha}~dA(z).
                    \end{aligned}
                \]
                Thanks to conformal invariance, the integral $I_1$ is comparable to $\|f\|^p_{\alpha,p}$, independently on $a$:
                \[
                    \begin{aligned}
                        I_1 & = \int_{\D} \big|f(z)\big|^{p-2}\big|f'(z)\big|^{2}\big|\varphi_a'(\varphi_a(z))\big|^{\alpha+2} (1-|\varphi_a(z)|^2)^{\alpha+2}~\frac{dA(z)}{(1-|z|^2)^2} \\
                        & = \int_{\D} \big|f(z)\big|^{p-2}\big|f'(z)\big|^{2} \left(\frac{1 - |\varphi_a(z)|^2}{|\varphi_a'(z)|}\right)^{\alpha+2
                        }~\frac{dA(z)}{(1-|z|^2)^2} \\
                        & = \int_{\D} \big|f(z)\big|^{p-2}\big|f'(z)\big|^{2} (1-|z|^2)^{\alpha}~dA(z) \\
                        & \simeq \|f\|^p_{\alpha,p},
                    \end{aligned}
                \]
                since 
                \[
                    \frac{1-|\varphi_a(z)|^2}{|\varphi_a'(z)|^2}=1-|z|^2.
                \]
                Hence we are left to show that $I_2$ is controlled by $\|f\|^p_{\alpha,p}$, uniformly in $a$. 
            
            By taking a derivative of the identity
                \[
                    \varphi_a'(\varphi_a(z))=\frac{1}{\varphi_a'(z)}=-\frac{(1-\overline az)^2}{1-|a|^2}
                \]
                we obtain
                \[
                    \varphi_a''(\varphi_a(z))=-\frac{\varphi_a''(z)}{\varphi_a'(z)^3}=-2 \overline{a} \frac{(1-\overline az)^3}{(1-|a|^2)^2}.
                \]
            
            Hence using the conformal invariance of the hyperbolic measure we have that
                \[
                    \begin{aligned}
                        I_2 & = \int_{\D} \big|f(\varphi_a(z))\big|^{p}\big|\varphi_a'(z)\big|^{\alpha - 2}\big| \varphi_a''(z)\big|^{2} (1-|z|^2)^{\alpha+2}~\frac{dA(z)}{(1-|z|^2)^2}. \\
                        & \simeq |a|^2  \int_{\D} \big|f(z)\big|^{p} \left(\frac{1-|\varphi_a(z)|^2}{|\varphi_a'(z)|}\right)^{\alpha+2} ~\frac{dA(z)}{\left|1-\overline az\right|^2(1-|z|^2)^2} \\
                        & = |a|^2 \int_{\D} \big|f(z)\big|^{p} \frac{(1-|z|^2)^{\alpha}}{|1-\overline{a}z|^2}~dA(z)
                    \end{aligned}
                \]
                which is controlled by $\|f\|_{L^p\left(\D, d\mu^{\alpha, 2}_a\right)}^p,$ uniformly in $a$ if and only if $ \{\mu_a^{\alpha, 2} \}_{a\in\D}$ are uniformly Carleson measures for $A^p_\alpha$.
        \end{proof}

        In view of Proposition \ref{prop:CM_same}, Theorem \ref{theo:confinv} is proven once we show that the Carleson constants of $(\mu^{\alpha, 2}_a)_{a\in\D}$ for the embedding $ A^2_\alpha\subset L^2(\D, d\mu^{\alpha, 2}_a)$ are bounded uniformly in $a\in\D$ if $\alpha>0$. More generally, the following holds:

        \begin{proposition}\label{prop:cm_a}
            Let $\alpha=\gamma+2-s>0$ and $s\ge0$. Then there exists a $C>0$ such that
                \[
                    \|f\|_{L^2(\D, d\mu^{\gamma, s}_a)}\leq C~\|f\|_{\alpha,2}
                \]
                for all $f$ in $A^2_\alpha$ and $a\in\D$.
        \end{proposition}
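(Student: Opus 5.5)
The plan is to reduce everything to box estimates for $\mu_a:=\mu^{\gamma,s}_a$ and then verify a Carleson criterion uniformly in $a$. First, a remark on hypotheses: if $\gamma\le -1$, then $\mu_a$ gives infinite mass to a neighbourhood of $\T$, and constants already fail. So I will assume $\gamma>-1$, which holds in the application $\gamma=\alpha$, $s=2$; I read this condition as implicit in the statement. The basic geometric input is the comparability, for $z\in\D$,
\[
|1-\overline az|\simeq (1-|z|)+(1-|a|)+|\arg(z\overline a)|.
\]

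The first step is the box estimate. For $w\in\D$ put $h=1-|w|$, and let $d$ denote the distance from $S_w$ to $a$ measured in the quantity above. Integrate $(1-|z|)^\gamma|1-\overline az|^{-s}$ over $S_w$, first in the angular variable and then in the radial one; since $\gamma>-1$ the radial integral converges. This should give
\[
\mu_a(S_w)\lesssim \frac{h^{\gamma+2}}{(h+d)^s}\le h^{\gamma+2-s}=h^\alpha ,
\]
uniformly in $a$. When $s>1$ and the box contains the direction of $a$, the angular integral is of order $(1-|z|)^{\gamma+1-s}$, and the remaining radial integral is again of order $h^{\alpha}$ because $\alpha>0$. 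When $\alpha\ge1$ this already finishes the proof: $\mu_a(S_w)\lesssim h^\alpha$ is the classical Carleson box condition for $H^2=A^2_1$ and for the weighted Bergman spaces $A^2_\alpha$, $\alpha>1$.

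For $0<\alpha<1$ I will check the Arcozzi--Rochberg--Sawyer condition \eqref{eqn:ARS} of Theorem~\ref{theo:ARS}, with a constant independent of $a$. For $z\in\tilde S_w$, bound $\mu_a(S_z\cap S_w)^2\le \mu_a(S_z\cap S_w)\,\mu_a(S_z)$ and use $\mu_a(S_z)\lesssim (1-|z|)^\alpha$ from the first step. The left side of \eqref{eqn:ARS} is then at most a constant times
\[
\int_{\tilde S_w}\mu_a(S_z\cap S_w)\frac{dA(z)}{(1-|z|)^{2}}
=\int_{S_w}\left(\int_{\{z\in\tilde S_w:\ \zeta\in S_z\}}\frac{dA(z)}{(1-|z|)^2}\right)d\mu_a(\zeta),
\]
by Fubini. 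The inner integral is the hyperbolic area of a ``tent'' above $\zeta$, truncated at height $\simeq h$, and is $\simeq 1+\log\frac{h}{1-|\zeta|}$. It therefore remains to show
\[
\int_{S_w}\log\frac{e\,h}{1-|\zeta|}\,d\mu_a(\zeta)\lesssim \mu_a(S_w)
\]
uniformly in $w$ and $a$.

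This logarithmic loss is the main obstacle, since the crude bound $\mu_a(S_z)\lesssim(1-|z|)^\alpha$ on its own only yields a divergent logarithm. I will handle it by decomposing $S_w$ into the dyadic layers $\{2^{-k-1}h<1-|\zeta|\le 2^{-k}h\}$, on which the logarithm is $\simeq k$. The key claim is that $\mu_a$ of the $k$-th layer is at most $C2^{-k\eta}\mu_a(S_w)$ for some $\eta>0$; summing $k\,2^{-k\eta}$ then gives the estimate. The weight $(1-|\zeta|)^\gamma$ with $\gamma>-1$ supplies the factor $2^{-k(\gamma+1)}$ when $a$ is far from $S_w$. When $a$ is close to $S_w$, the factor $|1-\overline a\zeta|^{-s}$ concentrates the mass, and I will instead use the exponent $\alpha>0$ from the radial computation in the first step: the layer at height $2^{-k}h$ near $a$ carries mass $\lesssim (2^{-k}h)^{\alpha}$ against a total of order $h^\alpha$. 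Treating these two regimes separately, according to whether $1-|a|$ and $d$ are $\lesssim h$ or not, gives \eqref{eqn:ARS} with a uniform constant. By the last clause of Theorem~\ref{theo:ARS}, this yields the uniform embedding constant $C$.
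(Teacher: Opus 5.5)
Your argument is correct, but it takes a detour that the paper avoids. Both proofs verify \eqref{eqn:ARS} uniformly in $a$. The difference is in how $\mu_a(S_z\cap S_w)^2$ is bounded.

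The paper uses $\mu_a(S_z\cap S_w)\le \mu_a(S_z)$ together with the full box estimate of Lemma~\ref{lemma:mu_boxes}, $\mu^{\gamma,s}_a(S_z)\simeq (1-|z|)^{\gamma+2}|1-\overline a z|^{-s}$. The integrand of \eqref{eqn:ARS} is then bounded by $(1-|z|)^{\gamma+s}|1-\overline a z|^{-2s}$, which is the density of $\mu^{\gamma+s,2s}_a$. That measure lies in the same family with the same index $(\gamma+s)+2-2s=\alpha$. A second application of the lemma gives $\mu^{\gamma+s,2s}_a(\tilde S_w)\simeq \mu^{\gamma,s}_a(S_w)\,\bigl((1-|w|)/|1-\overline a w|\bigr)^{s}\lesssim \mu^{\gamma,s}_a(S_w)$, and no logarithm ever appears.

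Your logarithm comes from replacing $\mu_a(S_z)$ by the weaker bound $(1-|z|)^\alpha$, which discards the factor $\bigl((1-|z|)/|1-\overline a z|\bigr)^{s}$. The sharper bound $h^{\gamma+2}/(h+d)^s$ from your own first step, plus the matching lower bound for $\mu_a(S_w)$, already closes the argument in a few lines.

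What your route buys:
\begin{enumerate}
\item A mechanism that does not rely on the self-similarity $(\gamma,s)\mapsto(\gamma+s,2s)$ of the family: a box condition plus geometric decay of mass across dyadic layers implies \eqref{eqn:ARS}.
\item The hypothesis $\gamma>-1$, which is genuinely needed and missing from the statement. For $\gamma\le-1$ every $\mu^{\gamma,s}_a$ is infinite on every box, so constants already fail and Lemma~\ref{lemma:mu_boxes} fails too.
\item For $\alpha\ge 1$, you appeal to the classical box condition for $H^2$ and the weighted Bergman spaces. The paper instead uses Theorem~\ref{theo:ARS}, which is a Dirichlet-range result.
\end{enumerate}

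One detail to fix when you write out the layer estimate. In the regime where $a$ is close to $S_w$, your bound $(2^{-k}h)^\alpha$ for the mass of the $k$-th layer holds only for $s>1$. For $s<1$ the angular integral of $|1-\overline a\zeta|^{-s}$ across the layer is of order $h^{1-s}$, not $(2^{-k}h)^{1-s}$. The layer therefore carries mass $\simeq 2^{-k(\gamma+1)}h^{\alpha}$, and $\gamma+1<\alpha$ in that case. For $s=1$ an extra factor $k$ appears. Both are still summable against the logarithmic weight $k$, so your key claim survives with any $0<\eta<\min\{\gamma+1,\alpha\}$.
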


        We deduce Proposition \ref{prop:cm_a} from Theorem \ref{theo:ARS}. In order to do so, we need to estimate $\mu^{\gamma, s}_a(S_w)$, for all $a$ and $w$ in the unit disc:

        \begin{lemma}\label{lemma:mu_boxes}
            Let $\gamma+2-s>0$ and $s \geq 0$. Then
                \[
                    \mu^{\gamma, s}_a(\tilde S_w)\simeq\mu^{\gamma,s}_a(S_w)\simeq\frac{(1-|w|)^{\gamma+2}}{|1-\overline aw|^s}, \quad a, w\in\D.
                \]
                The hidden constants are independent of $a$ and $w$.
        \end{lemma}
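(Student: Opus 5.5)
Since $S_w\subset\tilde S_w$, we have $\mu^{\gamma,s}_a(S_w)\le\mu^{\gamma,s}_a(\tilde S_w)$ for free. The plan is therefore to prove two bounds: an upper bound $\mu^{\gamma,s}_a(\tilde S_w)\lesssim (1-|w|)^{\gamma+2}|1-\overline aw|^{-s}$ and a matching lower bound for $\mu^{\gamma,s}_a(S_w)$. Throughout, write $h=1-|w|$, so $\tilde S_w$ is a polar rectangle of angular width $\simeq h$ and radial depth $2h$.

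\emph{Lower bound.} I would restrict to the subregion $R_w=\{z\in S_w:\ h/2\le 1-|z|\le h,\ |\arg(z\overline w)|\le \pi h\}$. It has area $\simeq h^2$ and $(1-|z|)^\gamma\simeq h^\gamma$ there. On $R_w$ we have $|z-w|\lesssim h$, and $|1-\overline a w|\ge 1-|w|=h$. Hence
\[
|1-\overline az|\le|1-\overline aw|+|z-w|\lesssim|1-\overline aw|,
\]
and since $s\ge0$ this gives $\mu^{\gamma,s}_a(R_w)\gtrsim h^{\gamma+2}|1-\overline aw|^{-s}$.

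\emph{Upper bound.} Here the factor $|1-\overline az|^{-s}$ can be much larger than $|1-\overline aw|^{-s}$ when $a$ lies close to the box, so I split into two cases.

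\textbf{Case 1: $|1-\overline aw|\ge Kh$ for a large fixed $K$.} For $z\in\tilde S_w$ we have $|z-w|\lesssim h\le |1-\overline aw|/K$. Then $|1-\overline az|\ge|1-\overline aw|-|z-w|\ge\tfrac12|1-\overline aw|$, so the integrand is comparable to $h^{\gamma}|1-\overline aw|^{-s}$ on a set of area $\simeq h^2$. Note that $\gamma>-1$ is needed here for the radial integral $\int_0^{2h}t^\gamma\,dt\simeq h^{\gamma+1}$. This follows if $\gamma\ge s-2+$something, but in general I would simply check it: $\gamma=\alpha+s-2>s-2\ge-2$. That alone is not enough, so I would add $\gamma>-1$ as an implicit hypothesis (it is needed anyway for $\mu^{\gamma,s}_a$ to be locally finite near $\T$). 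In the application, $\gamma=\alpha>0$.

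\textbf{Case 2: $|1-\overline aw|< Kh$.} Now the target is $\simeq h^{\gamma+2-s}$. Use $|1-\overline az|\gtrsim (1-|z|)+|\arg(z\overline a)|$ (up to the harmless $1-|a|$ term, which only helps). Pass to polar coordinates $z=re^{i\theta}$ with $t=1-r\in(0,2h)$ and $\theta$ ranging over an interval $I$ of length $\simeq h$. This gives
\[
\mu^{\gamma,s}_a(\tilde S_w)\lesssim\int_0^{2h}t^\gamma\int_I\frac{d\theta}{(t+|\theta-\theta_a|)^s}\,dt .
\]
The inner integral is bounded by $\int_{|u|\lesssim h}(t+|u|)^{-s}\,du$, which is $\lesssim h^{1-s}$ if $s<1$, $\lesssim \log(h/t)+1$ if $s=1$, and $\lesssim t^{1-s}$ if $s>1$. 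Integrating against $t^\gamma$ over $(0,2h)$ yields $h^{\gamma+2-s}$ in all three cases. The case $s>1$ uses exactly $\gamma+2-s>0$, and the case $s\le1$ uses $\gamma>-1$. Finally, $h^{\gamma+2-s}\simeq h^{\gamma+2}|1-\overline aw|^{-s}$ because $|1-\overline aw|\simeq h$ in this case.

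\emph{Main obstacle.} I expect Case 2 to be the hardest step: it is where the near-singularity at $a$ must be integrated and where the hypothesis $\gamma+2-s>0$ is genuinely used. The rest is bookkeeping of comparability constants, independent of $a$ and $w$.
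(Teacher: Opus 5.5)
Your proof is correct, but it is organized differently from the paper's.

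\textbf{The paper's route.} It first deduces $\mu^{\gamma,s}_a(\tilde S_w)\simeq\mu^{\gamma,s}_a(S_w)$ from $|1-\overline az|\simeq\max\{1-|a|,1-|z|,|\arg(\overline az)|\}$. It then evaluates $\mu^{\gamma,s}_a(S_w)$ two-sidedly in three geometric cases: $w\in S_a$, $a\in S_w$, or neither. The delicate case $a\in S_w$ is handled by cutting $S_w$ into dyadic shells $S_{a_n}\setminus S_{a_{n-1}}$ around $a$. The resulting sum $\sum_{n\le\eta(a,w)}2^{n(\gamma+2-s)}$ is dominated by its last term because $\gamma+2-s>0$.

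\textbf{Your route.} You get the lower bound for every $a$ at once, from a sub-box on which $|1-\overline az|\lesssim|1-\overline aw|$. You prove the upper bound directly on $\tilde S_w$, so the comparison of the two boxes comes free from $S_w\subset\tilde S_w$. You then split only according to whether $|1-\overline aw|\gg 1-|w|$. In the near case you replace the dyadic sum by a rearrangement bound on the angular integral, followed by an explicit integral in $t=1-|z|$. This in fact gives $\mu^{\gamma,s}_a(\tilde S_w)\lesssim(1-|w|)^{\gamma+2-s}$ uniformly in $a$.

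Your argument is more elementary and shows exactly where each hypothesis is used; for instance, $\gamma+2-s>0$ matters only when $s>1$. The paper's dyadic bookkeeping fits the box language of the Arcozzi--Rochberg--Sawyer condition it feeds into.

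\textbf{On the hypothesis $\gamma>-1$.} The extra hypothesis you add is genuinely necessary, not an artifact of your method. Since $|1-\overline az|^{-s}\ge 2^{-s}$, every box has infinite $\mu^{\gamma,s}_a$-measure when $\gamma\le -1$. For example, $s=0$ and $\gamma=-3/2$ satisfy the stated hypotheses, yet the conclusion fails. The paper's proof uses $\gamma>-1$ silently whenever it writes $\int_{S_w}(1-|z|)^\gamma\,dA\simeq(1-|w|)^{\gamma+2}$, or the analogous identity on the shells $E_n(a)$. The condition does hold wherever the lemma is ultimately used: $\gamma=\alpha>0$ for Theorem~\ref{theo:confinv}, and $\gamma=\alpha-2$ with $\alpha>1$ in Remark~\ref{rem:AM}.

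\textbf{A wording fix in Case 1.} The integrand is not pointwise comparable to $h^\gamma|1-\overline aw|^{-s}$ on $\tilde S_w$. What you actually use, and what suffices, is $|1-\overline az|^{-s}\le 2^s|1-\overline aw|^{-s}$ together with $\int_{\tilde S_w}(1-|z|)^\gamma\,dA\lesssim h^{\gamma+2}$.
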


        \begin{proof}
            We will make use of the elementary estimate
                \begin{equation}\label{eqn:Nikos_Lemma}
                    |1-\overline az|\simeq\max\{1-|a|, 1-|z|, |\arg(\overline az)|\}, \quad a,z\in\D,
                \end{equation}
                where the hidden constants are universal. In particular, this implies that
                \[
                    \mu^{\gamma, s}_a(\tilde S_w)=\int_{\tilde S_w}\frac{(1-|z|)^\gamma}{|1-\overline az|^s}~dA(z)\simeq\int_{ S_w}\frac{(1-|z|)^\gamma}{|1-\overline az|^s}~dA(z)=\mu^{\gamma, s}_a(S_w).
                \]
                We will now estimate $\mu^{\gamma, s}_a(S_w)$ by distinguishing three cases:
        
            \begin{itemize}
                \item $w$ belongs to $S_a$. In this case, any $z$ in $S_w$ is in a box centered at $a$ with twice the width of $S_a$, thus the maximum in \eqref{eqn:Nikos_Lemma} is comparable to $1-|a|$. Therefore
                    \[
                        \mu^{\gamma, s}(S_w)=\int_{ S_w}\frac{(1-|z|)^\gamma}{|1-\overline az|^s}~dA(z)\simeq\frac{1}{(1-|a|)^s}\int_{ S_w}(1-|z|)^\gamma~dA(z)\simeq\frac{(1-|w|)^{\gamma+2}}{|1-\overline aw|^s}.
                    \]
            
                \item $a$ belongs to $S_w$. This is the case that requires a bit more care. Given $a$ in $\D$ and $n\ge0$, set 
                    \[
                        I^n_a:=\left\{\theta\in\T\,\bigg|\, \frac{|\arg(a\overline{\theta})|}{2\pi}<2^{n-1}(1-|a|)\right\}.
                    \]
                    In particular, $I^0_a$ is the base of the box $S_a$, while $I^n_a$ are the dyadic dilates of $I^0_a$, for all $n\ge1$. Define $a_n$ as the point in $\D$ such that $I^0_{a_n}=I_a^n$.  Finally, let $\eta(a, w)$ denote the smallest positive integer $n$ such that $S_w\subset S_{a_n}$. Note that, by construction, 
                    \begin{equation}
                    \label{eqn:eta}
                        \left|\eta(a, w)-\log_2\frac{1-|w|}{1-|a|}\right|\le 1. 
                    \end{equation}
                    Let $E_n(a):=S_{a_n}\setminus S_{a_{n-1}}$ for all $n\ge1$, $E_0:=S_a$. Hence for all $z$ in $E_n(a)$, the maximum in \eqref{eqn:Nikos_Lemma} is comparable to $2^n(1-|a|)$. Therefore 
                    \[
                        \begin{split}
                            \mu^{\gamma, s}_a(S_w)&=\int_{S_w}\frac{(1-|z|)^\gamma}{|1-\overline az|^s}~dA(z)\\
                            &\simeq\frac{1}{(1-|a|)^s}\sum_{n=0}^{\eta(a, w)}\frac{1}{2^{sn}}\int_{E_n(a)}(1-|z|)^\gamma\,dA(z)\\
                            &=\frac{1}{(1-|a|)^s}\sum_{n=0}^{\eta(a, w)}\frac{(1-|a_n|)^{\gamma+2}-(1-|a_{n-1}|)^{\gamma+2}}{2^{sn}}\\
                            &\simeq(1-|a|)^{\gamma+2-s}\sum_{n=0}^{\eta(a, w)}2^{n(\gamma+2-s)}\\
                            &\simeq(1-|w|)^{\gamma+2-s}\simeq\frac{(1-|w|)^{\gamma+2}}{|1-\overline aw|^s}, 
                        \end{split}
                    \]
                    thanks to \eqref{eqn:eta}, the fact that $\gamma+2-s>0$ and $|1-\overline aw|\simeq 1-|w|$, $a$ being in $S_w$.
            
                \item $a\notin S_w$ and $w\notin S_a$. Hence for all $z$ in $S_w$ the maximum in \eqref{eqn:Nikos_Lemma} is comparable to $|\arg(\overline az)|$, which is independent of $|z|$. Therefore
                \[
                    \begin{split}
                        \mu_a^{\gamma, s}(S_w)&=\int_{S_w}\frac{(1-|z|)^\gamma}{|1-\overline az|^s}~dA(z)\\
                        &\simeq\int_{I^0_{w}}\frac{d\theta}{|\arg(\overline ae^{i\theta})|^s}\int_{1-|w|}^1(1-r)^\gamma\,dr\\
                        &\simeq\frac{(1-|w|)^{\gamma+2}}{|\arg(\overline aw)|^s}\simeq\frac{(1-|w|)^{\gamma+2}}{|1-\overline aw|^s}.
                    \end{split}
                \]
            \end{itemize}
        \end{proof}
        
        We are now ready to prove Proposition \ref{prop:cm_a}.
        
        \begin{proof}[Proof of Proposition \ref{prop:cm_a}]
            Thanks to Theorem \ref{theo:ARS}, it suffices to show that $\mu^{\gamma, s}_a$ satisfies \eqref{eqn:ARS}, where $C$ does not depend on $a$. A few applications of Lemma \ref{lemma:mu_boxes} yield:
                \[
                    \begin{split}
                        &\int_{\tilde S_w}\mu_a^{\gamma, s}(S_z\cap S_w)^2\frac{dA(z)}{(1-|z|)^{\alpha+2}}\\
                        \lesssim&\int_{\tilde S_w}\frac{(1-|z|)^{\gamma+s}}{|1-\overline az|^{2s}}~dA(z)\\
                        =&\mu_a^{\gamma+s, 2s}(\tilde{S}_w)\\
                        \simeq& \mu_a^{\gamma, s}(S_w)\left(\frac{1-|w|^2}{|1-\overline aw|}\right)^s,\\
                        \lesssim & \mu_a^{\gamma, s}(S_w)
                    \end{split}
                \]
            where the last inequality is due to \eqref{eqn:Nikos_Lemma} and $s\ge0$. This concludes the proof.
        \end{proof}

        \begin{remark}
        \label{rem:AM}
            For $p > 0$ and $\alpha > 1$, let 
                \[
                    B^p_\alpha:=\left\{f\in \Hol \,\bigg|\, \int_\D |f'(z)|^p(1-|z|)^{\alpha-2}\,dA(z) < \infty \right\},
                \]
                denote the classic Besov space on the unit disc. If $p < \alpha$, the same argument as in Proposition \ref{prop:conf_index_cm} shows that  $\sup_{a\in\D}\big\|T^{\frac{\alpha}{p}-1}_a\big\|_{\mathcal{B}(B^p_\alpha)}<\infty$ if and only if the measures $\{\mu^{\alpha-2, p}_a\}_{a\in\D}$ are uniformly Carleson for $B^p_\alpha$. This, together with \cite[Thm. 1]{ARS02} and Lemma \ref{lemma:mu_boxes}, gives an alternative proof that the index of conformal invariance of $B^p_\alpha$ is $\frac{\alpha}{p}-1$, \cite{AM21}.
        \end{remark}

\bibliographystyle{abbrv}
\bibliography{Arxiv_bibliography}
        
\end{document}